\newcommand{\cA}{{\mathcal{A}}}
\newcommand{\cR}{{\mathcal{R}}}
\newcommand{\hB}{{\widehat{B}}}
\newcommand{\hD}{{\widehat{D}}}
\newcommand{\hf}{{\widehat{f}}}
\newcommand{\hg}{{\widehat{g}}}
\newcommand{\hH}{{\widehat{H}}}
\newcommand{\hI}{{\widehat{I}}}
\newcommand{\hP}{{\widehat{P}}}
\newcommand{\hT}{{\widehat{T}}}
\newcommand{\hX}{{\widehat{X}}}
\newcommand{\hmu}{{\widehat{\mu}}}
\newcommand{\hnu}{{\widehat{\nu}}}
\newcommand{\hPi}{{\widehat{\Pi}}}
\newcommand{\N}{{\mathbb N}}
\newcommand{\R}{{\mathbb R}}
\newcommand{\bx}{{\mathbf{x}}}
\newcommand{\by}{{\mathbf{y}}}
\newcommand{\bz}{{\mathbf{z}}}
\newcommand{\barf}{\overline{f}}
\newcommand{\barg}{\overline{g}}
\newcommand{\tmut}{{{\mu^{\scriptscriptstyle{P}}_t}}}
\newcommand{\tmuti}{{{\mu^{\scriptscriptstyle{P}}_{t_i}}}}
\newcommand{\tmutst}{{{\mu^{\scriptscriptstyle{P}}_{t_*}}}}
\newcommand{\thmut}{{\widehat{\tmut}}}
\newcommand{\thmuti}{{\widehat{\tmuti}}}
\newcommand{\thmutst}{{\widehat{\tmutst}}}
\DeclareMathOperator{\Bd}{Bd}
\newcommand{\ilim}{{\varprojlim}}
\newcommand{\thr}[1]{\left< #1 \right>}
\newcommand{\floor}[1]{\left\lfloor #1 \right\rfloor}
\begin{document}

\title[Statistical Stability -- Tent map attractors]{Statistical Stability
for Barge-Martin attractors derived from tent maps}

\author{Philip Boyland}
\address{Department of Mathematics\\University of Florida\\372 Little
    Hall\\Gainesville\\ FL 32611-8105, USA}
\email{boyland@ufl.edu}
\author{Andr\'e de Carvalho}
\address{Departamento de Matem\'atica Aplicada\\ IME-USP\\ Rua Do Mat\~ao
    1010\\ Cidade Universit\'aria\\ 05508-090 S\~ao Paulo SP\\ Brazil}
\email{andre@ime.usp.br}
\author{Toby Hall}
\address{Department of Mathematical Sciences\\ University of Liverpool\\
    Liverpool L69 7ZL, UK}
\email{tobyhall@liverpool.ac.uk}

\date{December 2019}
\thanks{The authors are grateful for the support of FAPESP grant
  \mbox{2016/25053-8} and CAPES grant 88881.119100/2016-01}
\subjclass[2010]{37A10, 37B45, 37C75, 37E05}
\keywords{Statistical stability, inverse limits, attractors, tent maps}

\begin{abstract} 
	Let $\{f_t\}_{t\in(1,2]}$ be the family of core tent maps of slopes~$t$.
	The parameterized Barge-Martin construction yields a family of disk
	homeomorphisms $\Phi_t\colon D^2\to D^2$, having transitive global
	attractors $\Lambda_t$ on which $\Phi_t$ is topologically conjugate to the
	natural extension of $f_t$. The unique family of absolutely continuous
	invariant measures for $f_t$ induces a family of ergodic $\Phi_t$-invariant
	measures $\nu_t$, supported on the attractors~$\Lambda_t$.

	We show that this family $\nu_t$ varies weakly continuously, and that the
	measures $\nu_t$ are physical with respect to a weakly continuously varying
	family of background Oxtoby-Ulam measures $\rho_t$. 

    Similar results are obtained for the family $\chi_t\colon S^2\to S^2$ of
    transitive sphere homeomorphisms, constructed in a previous paper of the
    authors as factors of the natural extensions of~$f_t$.
\end{abstract}

\maketitle


\section{Introduction}
The parameterized Barge-Martin construction applied to the family~$f_t$ of core
tent maps on the interval~$I$ yields a family of disk homeomorphisms $\Phi_t$,
each of which has a transitive global attractor $\Lambda_t$ which is
homeomorphic to the inverse limit $\ilim(I, f_t)$ and on which $\Phi_t$ is
topologically conjugate to the natural extension $\hf_t$~\cite{BM, barge,
family, prime}. These inverse limits are topologically quite intricate and have
been the subject of intense investigation. The main recent focus  has been the
proof of the Ingram conjecture: for different values of $t\in [\sqrt{2}, 2]$
the inverse limits are not homeomorphic (for references see~\cite{Ingram},
which contains the final proof for tent maps which are not restricted to their
cores). In the case  where the parameter $t$ is such that the critical orbit of
$f_t$ is dense (a full measure, dense $G_\delta$ set of parameters), theorems
of Bruin and of Raines imply that the inverse limit $\hI_t$ is nowhere locally
the product of a Cantor set and an interval~\cite{bruin,raines}. Perhaps more
striking, Barge, Brooks and Diamond~\cite{BBD} show that there is a
dense~$G_\delta$ set of parameters~$t$ for which the inverse limit has a strong
self-similarity: every open subset of~$\hI_t$ contains a homeomorphic copy of
$\hI_s$ for every $s\in[\sqrt{2}, 2]$.

A parameterized family of dynamical systems is said to be \emph{statistically
stable} if the time averages of observables along typical orbits vary
continuously with the parameter: more formally, if there is a family of
invariant physical measures which vary continuously in the weak topology. The
classic examples of systems statistically stable under perturbation are
expanding smooth maps and Axiom A diffeomorphisms restricted to basins of
attractors. There have been extensions into the non-uniformly hyperbolic
context~\cite{ss1, ss2, ss3}, and a variety of specific, mostly low
dimensional, families~\cite{ss4, ss5, ss6, ss7, ss8} have been shown to be
statistically stable; see~\cite{sssurvey} for a survey. Of central importance
here  is a recent result of Alves and Pumari\~no~\cite{AP}, who prove that the
core tent map family $f_t\colon I\to I$ is statistically stable in the stronger
sense that the densities of its family of unique absolutely continuous
invariant measures (acims) $\mu_t$ vary continuously in $L^1$. These  acims
$\mu_t$ on~$I$ for $f_t$ induce a family of ergodic  $\Phi_t$-invariant
measures $\nu_t$ on~$D^2$, supported on the Barge-Martin
attractors~$\Lambda_t$. The main result here, Theorem~\ref{main}, states that
this family of $\Phi_t$-invariant measures $\nu_t$ is weakly continuous.

The definition of statistical stability also requires that the measures $\nu_t$
be physical: the set of regular points of each 
$\nu$ should have positive measure
with respect to a background Lebesgue measure. Equivalently,
if $\delta_z$ denotes the Dirac measure at the point $z$, 
then when $\nu$ is physical
\begin{equation*}
\frac{1}{N}\sum_{k=0}^{N-1} \delta_{f^k(x)} \rightarrow \nu
\end{equation*}
in the weak topology for a set of initial conditions $x$
of positive Lebesgue probability.
This means that the invariant measure $\nu$ is physically observable.
In the case of planar attractors considered here, the existence of  
physical (SRB\footnote{See Remark~\ref{SRB} below}) measures on the
attractors have been proved in a variety of cases; see \cite{Ysurvey, Vsurvey}
for  surveys. 
These include the classical hyperbolic results and their many
non-uniformly hyperbolic extensions as well as the  class of attractors
defined by Wang and Young \cite{WY}.   In view
of the role of inverse limits of unimodal interval maps as 
topological  models for certain H\'enon attractors \cite{barge, BH} 
we note that for the set of {B}enedicks-{C}arleson parameters
Benedicks and Young \cite{BY} have shown the
existence of an SRB measure for the {H}\'enon attractors and
statistical stability at these parameters is proved in \cite{ss4}.

The Barge-Martin construction
rests fundamentally on an inverse limit and there is a natural correspondence
between invariant measures for a map and those of its inverse limit. In
addition, a key result of Kennedy, Raines and Stockman connects regular
points of an invariant measure to those of the corresponding invariant measure
in the inverse limit (\cite{KRS}). However, the background Lebesgue measure on,
say, a manifold does not in any natural way produce such a measure on an
inverse limit. By using a construction from \cite{prime} we can
produce a background physical measure of Oxtoby-Ulam type for the Barge-Martin
homeomorphisms here.  In Section~\ref{sec:iso-meas} we comment on how this can
be used with the Homeomorphic Measures Theorem to obtain a background Lebesgue
measure.

The first paragraph of the following theorem summarizes relevant results
from~\cite{family}, while the second paragraph contains the main results of
this paper.
\begin{theorem}
\label{introthm} There exists a continuously varying family of
    homeomorphisms $\Phi_t:D^2\to D^2$, each having a transitive global
    attractor $\Lambda_t$ which is homeomorphic to the core tent map inverse
    limit $\ilim(I, f_t)$ and on which~$\Phi_t$ is topologically conjugate to
    the natural extension $\hf_t$. The attractors~$\Lambda_t$ vary Hausdorff
    continuously.

    The family $\nu_t$ of ergodic $\Phi_t$-invariant measures induced by the
    acims $\mu_t$ for~$f_t$ is weakly continuous. The measures $\nu_t$ are
    physical with respect to a weakly continuous family of background
    Oxtoby-Ulam measures~$\rho_t$.
\end{theorem}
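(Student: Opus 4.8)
The plan is to do all the work in the common ambient disk $D^2$, pushing the weak continuity of the interval acims $\mu_t$ (supplied in $L^1$, hence weakly, by Alves--Pumari\~no~\cite{AP}) up to the disk along a continuously varying family of semiconjugacies. The organising principle is the following rigidity statement, which I would isolate as a lemma: if $\nu$ is a $\Phi_t$-invariant Borel probability measure on $D^2$ whose image under the semiconjugacy $p_t\colon D^2\to I$ (satisfying $p_t\circ\Phi_t=f_t\circ p_t$, and restricting on $\Lambda_t$ to the zeroth-coordinate projection $\pi_0\colon\ilim(I,f_t)\to I$ under the identification $\Lambda_t\cong\ilim(I,f_t)$, provided by the construction of~\cite{family,prime}) equals $\mu_t$, then $\nu=\nu_t$. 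Indeed such a $\nu$ is automatically carried by the global attractor $\Lambda_t$, so it transports via the conjugacy $h_t\colon\Lambda_t\to\ilim(I,f_t)$ to an $\hf_t$-invariant measure $\hnu$ with $(\pi_0)_*\hnu=\mu_t$; since $\pi_i=\pi_0\circ\hf_t^{\,i}$ and $\hnu$ is invariant, $(\pi_i)_*\hnu=\mu_t$ for every $i\ge 0$, and the cylinders $\pi_i^{-1}(C)$ form a $\pi$-system generating the Borel $\sigma$-algebra of $\ilim(I,f_t)$ on which $\hnu$ is now prescribed, so $\hnu=\hmu_t$ and $\nu=\nu_t$. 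I would record at the outset the facts imported from~\cite{family,prime}: the families $t\mapsto\Phi_t$ and $t\mapsto p_t$ vary continuously (uniformly on $D^2$), and the attractors $\Lambda_t$ vary Hausdorff continuously.

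Weak continuity of $t\mapsto\nu_t$ then follows by a standard compactness argument. Fix $t_0$ and $t_n\to t_0$; since $\mathcal P(D^2)$ is weak-$*$ compact and metrizable it suffices to prove that every subsequential limit $\nu_*$ of $(\nu_{t_n})$ equals $\nu_{t_0}$. First $\nu_*$ is $\Phi_{t_0}$-invariant: for $\varphi\in C(D^2)$ one has $\varphi\circ\Phi_{t_n}\to\varphi\circ\Phi_{t_0}$ uniformly, so $\int\varphi\circ\Phi_{t_0}\,d\nu_*=\lim\int\varphi\circ\Phi_{t_n}\,d\nu_{t_n}=\lim\int\varphi\,d\nu_{t_n}=\int\varphi\,d\nu_*$. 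Second $(p_{t_0})_*\nu_*=\mu_{t_0}$: for $\psi\in C(I)$, using $p_{t_n}\to p_{t_0}$ uniformly together with the weak continuity of $\mu_t$, $\int\psi\,d\bigl((p_{t_0})_*\nu_*\bigr)=\lim\int\psi\circ p_{t_n}\,d\nu_{t_n}=\lim\int\psi\,d\mu_{t_n}=\int\psi\,d\mu_{t_0}$. The rigidity lemma now gives $\nu_*=\nu_{t_0}$, proving the first assertion. (Hausdorff continuity of $\Lambda_t$ reappears here only as a sanity check, showing directly that $\nu_*$ is supported on $\Lambda_{t_0}$.)

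For the second assertion I would first produce the background measures by applying the construction of~\cite{prime} in a parameterized way to the family $\Phi_t$: this yields, for each $t$, an Oxtoby--Ulam measure $\rho_t$ on $D^2$ (non-atomic, positive on open sets, vanishing on $\partial D^2$), with $t\mapsto\rho_t$ weakly continuous because the construction depends continuously on $\Phi_t$ and $\Lambda_t$, and arranged so that $(p_t)_*\rho_t\ll\mathrm{Leb}_I$. Physicality is then extracted from the same rigidity mechanism. For $\rho_t$-a.e.\ $z\in D^2$ the point $p_t(z)\in I$ is generic for $\mu_t$, since Lebesgue-a.e.\ point of $I$ is generic for the acim (a standard property of the core tent family) and $(p_t)_*\rho_t\ll\mathrm{Leb}_I$. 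For such a $z$ the empirical measures $\tfrac1N\sum_{k=0}^{N-1}\delta_{\Phi_t^k(z)}$ push forward under $p_t$ to the empirical measures of $p_t(z)$ for $f_t$, which converge to $\mu_t$; any weak-$*$ limit of the former is $\Phi_t$-invariant (and, as $d(\Phi_t^k(z),\Lambda_t)\to 0$, carried by $\Lambda_t$) with $p_t$-image $\mu_t$, hence equals $\nu_t$ by the lemma. Thus $\tfrac1N\sum_{k=0}^{N-1}\delta_{\Phi_t^k(z)}\to\nu_t$ for $\rho_t$-a.e.\ $z$, so the set of $\nu_t$-regular points has full $\rho_t$-measure. (The comparison of regular points downstairs and upstairs can alternatively be routed through the theorem of Kennedy--Raines--Stockman~\cite{KRS}.)

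The genuinely substantive step is the construction of the Oxtoby--Ulam family $\rho_t$ and the verification that it varies weakly continuously and satisfies $(p_t)_*\rho_t\ll\mathrm{Leb}_I$ uniformly in $t$; this is where the internal structure of the Barge--Martin disk from~\cite{prime} must be revisited and made parameter-dependent, and I expect it to be the main obstacle. Everything else — both the weak continuity of $\nu_t$ and the physicality of $\nu_t$ with respect to $\rho_t$ — reduces, once the continuously varying data $(\Phi_t,p_t,\Lambda_t,\rho_t)$ are in place, to weak-$*$ compactness in $\mathcal P(D^2)$ and to the one structural fact that an invariant measure downstairs induces a unique invariant measure on the inverse limit.
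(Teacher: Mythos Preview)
Your argument for weak continuity of $\nu_t$ is correct but genuinely different from the paper's. The paper never descends to a semiconjugacy $p_t$ or invokes a compactness/rigidity argument on $D^2$; instead it works upstairs in the \emph{fat inverse limit} $\hP=\ilim(I\times J,F)$, proves directly via Billingsley's $\pi$-system criterion (applied to preimages of ``tilted rectangles'') that the induced measures $\thmut$ on $\hP$ vary weakly continuously, and then simply pushes forward along the continuous map $h\colon\hP\to D^2$. Your route is more conceptual---it isolates the fact that an $f_t$-invariant measure has a unique $\hf_t$-invariant lift---but it silently requires that $p_t=\pi_0\circ h_t^{-1}$ vary continuously in~$t$, i.e.\ that the \emph{inverses} $h_t^{-1}$ vary continuously. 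The paper only ever uses forward continuity of~$h$. Your assumption does hold (a short compactness argument: any subsequential limit of $h_{t_n}^{-1}(z)$ lies in $\hD_{t_0}$ and is sent by $h$ to $z$), but you should state and prove it, and note that $p_t$ naturally lands in $D$, not $I$; the restriction to $I$ only makes sense on $\Lambda_t$, which is harmless since all the measures involved live there.

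For the physical/OU part you correctly identify that the real work is constructing $\rho_t$, and you defer to~\cite{prime}. The paper carries this out explicitly by building a slice-preserving homeomorphism $\Psi\colon S\times[0,\infty)\times J\to\hPi\setminus\bigsqcup_t\hI_t$ and pushing forward a fixed product measure; weak continuity of $\rho_t$ then follows from continuity of $h\circ\Psi$, and physicality from exhibiting a concrete set $\Psi_t(Y_t\times[1/2,3/4])$ of regular points with positive $\rho'_t$-measure (via Kennedy--Raines--Stockman). Your alternative route to physicality through the rigidity lemma is valid, but the hypothesis $(p_t)_*\rho_t\ll\mathrm{Leb}_I$ is not quite what the construction gives (the paper's $\rho_t$ assigns zero mass to $\Lambda_t$, and $\pi_0\circ h_t^{-1}$ sends much of its support into $D\setminus I$); what you actually need, and what the paper's construction does provide, is that the $\rho_t$-generic point is eventually mapped into~$I$ by $H_t$ onto a $\mu_t$-generic point.
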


In~\cite{prime} it is shown how a family $\{\chi_t\}_{t\in(\sqrt2, 2]}$ of
transitive sphere homeomorphisms can be constructed from the core tent maps
$f_t$, with each $\chi_t$ being a factor of the natural extension $\hf_t$ by a
mild semi-conjugacy. We finish the paper by indicating how the same techniques
can be applied to this family.

\section{Preliminaries}
\subsection{Inverse limits}
Let~$X$ be a compact metric space with metric~$d$, and let $g\colon X\to X$ be
continuous and surjective. The \emph{inverse limit} of $g\colon X\to X$ is the
space
\[
    \hX := \ilim(X,g) = \{\bx\in X^\N\,:\,g(x_{n+1})=x_n \text{ for all
     }n\in\N\}.
\] 
We endow $\hX$ with a metric, also denoted~$d$, defined by $d(\bx,
 \by) = \sum_{n=0}^\infty d(x_n, y_n)/2^n$, which induces its natural topology
 as a subspace of the product $X^\N$.

We denote elements of $\hX$ with angle brackets, $\bx =
\thr{x_0,x_1,x_2,\ldots}$ and refer to them as \emph{threads}.
The \emph{projections} are the maps $\pi_n:\hX\to X$ given by
$\pi_n(\thr{x_0,x_1,x_2,\ldots}) = x_n$. The \emph{natural extension} of
$g\colon X\to X$ is the homeomorphism $\hg\colon \hX\to\hX$ defined by
\[
    \hg(\thr{x_0,x_1,x_2,\ldots}) =
    \thr{g(x_0), x_0, x_1, x_2,\ldots}.
\]

Each $g$-invariant Borel probability measure $\nu$ gives rise to a unique
$\hg$-invariant Borel probability measure~$\hnu$ characterized by $(\pi_n)_*
\hnu = \nu$ for all $n$. The measure $\hnu$ is ergodic if and only if the
measure $\nu$ is.

\subsection{The parameterized Barge-Martin construction}
\label{BBM}
We outline the construction here: for details see \cite{BM,family,prime}. Let
$\{f_t\}_{t\in J}$ be the family of core tent maps with slope $t$, affinely
rescaled so they are all defined on the same interval $I = [-1,1]$: that is,
$f_t$ has slope~$t$ on $[-1, c)$ and slope~$-t$ on $(c, 1]$, where $c=1-2/t$,
so that $f_t(x) = \min(t(x-1)+3,\, t(1-x)-1)$. Here~$J=(1,2]$ is the interval
of parameters for tent maps with positive topological entropy.

We will use special coordinates for the disk. Let $S$ be the circle of radius
$2$ centered at the origin of~$\R^2$ with angular coordinates $y$, and let $I$
be the interval $[-1,1]$ in the real axis. Define $G:S\to I$ by $G(y) =
\cos(y)$, and let $D$ be a smooth embedding of the mapping cylinder of $G$.
Thus we may view $D$ as the disk of radius $2$ whose boundary is the circle
$S$. It is decomposed into a family of smoothly varying, smooth arcs
$\eta:S\times [0,1]
\to D$ with each $\eta(y,s)$ connecting the point $\eta(y,0) = y\in S$ to the
point $\eta(y,1) = G(y)\in I$. The point $\eta(y,s)$ of $D$ is given
coordinates $(y,s)$. Thus the interval $I$ consists of all points $(y,1)$ and
points in the interior of $I$ have two coordinates. There is a projection
$\tau\colon S\to I$ defined $\tau(y) = (y,1)$.

Let $\Upsilon\colon D\to D$ be the near-homeomorphism 
defined by
  \[
    \Upsilon(y,s) =
    \begin{cases} 
      (y,2s) & \text{ if } s\in[0,1/2],\\ 
      (y,1) & \text{ if }s\in[1/2,1].
    \end{cases}
  \]
An {\em unwrapping} of the tent map family~$\{f_t\}$ is a continuously varying
family of orientation-preserving near-homeomorphisms $\barf_t\colon D\to D$
with the properties that, for each~$t$,
\begin{enumerate}[a)]
    \item $\barf_t$ is injective on~$I$, and $\barf_t(I) \subseteq
        \{(y,s)\,:\, s\ge 1/2\}$,
  \item $\Upsilon\circ \barf_t|_I = f_t$, and
  \item  for all $y\in S$ and all $s\in[0,1/2]$,
        the second component of $\barf_t(y,s)$ is~$s$.
\end{enumerate}

Given such an unwrapping, let $H_t=\Upsilon \circ \barf_t\colon D\to D$, which
is a near-homeomorphism since both $\Upsilon$ and~$\barf_t$ are. By a theorem
of Brown (\cite{brown}) this implies that the inverse limit $\hD_t := \ilim(D,
H_t)$ is also a topological disk.

Since $H_t = f_t$ on $I$, there is a copy of $\hI_t:=\ilim(I, f_t)$ (which copy
we will also denote $\hI_t$) canonically embedded in $\hD_t$, on which the
restriction of $\hH_t$ agrees with $\hf_t$. Moreover, for every $z$ in the
interior of~$D$ there is some~$N$ such that $H_t^n(z)\in I$ for all $n\ge N$:
therefore any $\bz$ in the interior of $\hD_t$ satisfies $\hH_t^n(\bz)\to\hI_t$
as $n\to\infty$, so that $\hI_t$ is a global attractor for $\hH_t$.

We work simultaneously with the maps $\{H_t\}_{t\in J}$ by collecting them into
a \emph{fat map} $H\colon \Pi := D\times J \to D$ defined by $H(z, t) =
(H_t(z), t)$. The inverse limit $\hPi := \ilim(\Pi, H)$ provides a way
of topologizing the disjoint union of the individual inverse limits $\hD_t$,
since for each~$t$ the map $\iota_t\colon\bz\mapsto \thr{(z_0, t), (z_1, t),
\ldots}$ is an embedding of $\hD_t$ into $\hPi$, and $\hPi$
is the disjoint union of the images of these embeddings as~$t$ varies
through~$J$. We will identify each $\hD_t$ with its image under this embedding
without further comment, thus regarding it as a subset of $\hPi$. Then each
$\hD_t$ is invariant under the natural extension~$\hH$ of $H$, which acts on it
as $\hH_t$.
 
A parameterized version~\cite{family} of Brown's theorem yields a continuous
map $h\colon \hPi\to D^2$, where $D^2$ is a standard model of the
disk, with the property that $h_t := h|_{\hD_t}$ is a homeomorphism onto $D^2$
for each~$t$. The Barge-Martin family of disk homeomorphisms $\{\Phi_t\}_{t\in
J}$ is then defined by $\Phi_t = h_t\circ \hH\circ h_t^{-1}\colon D^2\to D^2$.
They have global attractors $\Lambda_t := h_t(\hI_t)$, which vary Hausdorff
continuously, and on which $\Phi_t$ is topologically conjugate to $\hf_t$.

The ergodic acim $\mu_t$ for $f_t$ on $I$ induces an ergodic $\hf_t$-invariant
measure $\hmu_t$ on $\hI_t$. This in turn generates $\nu_t
:=(h_t)_* \hmu_t$ which is an ergodic $\Phi_t$-invariant measure on $D^2$
supported on the attractor $\Lambda_t$.

\subsection{Weak continuity}
Let $X$ be a compact metric space. A sequence of Borel probability measures
$\mu_n$ is said to \emph{converge weakly} to $\mu_0$ if for all $\alpha\in
C(X, \R)$,
\begin{equation*}
    \int_X \alpha\; d\mu_n\to \int_X \alpha\; d\mu_0.
\end{equation*}
Since this is the only notion of convergence of measures which we will use, the
notation $\mu_n\to \mu_0$ will always denote weak convergence.

The following criteria from pages~16 and~17 of Billingsley~\cite{bill} will be
 used. Recall that a Borel set $A\subset X$ is called a \emph{continuity set}
 for a measure $\mu$ if $\mu(\Bd(A)) = 0$.
\begin{theorem} 
\label{billthm}\mbox{}
	\begin{enumerate}[(a)]
        \item $\mu_n\to \mu_0$ if and only if $\mu_n(A)\to \mu_0(A)$ for all
        $\mu_0$-continuity sets $A$.

        \item Let $\cA$ be a collection of Borel sets which is closed under
        finite intersections, with the property that every open subset of $X$
        is the countable union of sets from $\cA$. If $\mu_n(A)\to \mu_0(A)$
        for all $A\in \cA$, then $\mu_n\to \mu_0$.
	\end{enumerate}
\end{theorem}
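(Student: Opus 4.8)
The plan is to route both parts through the standard portmanteau circle of equivalences, taking as a hub the assertion that $\mu_n\to\mu_0$ precisely when $\liminf_n\mu_n(U)\ge\mu_0(U)$ for every open $U\subseteq X$, equivalently when $\limsup_n\mu_n(F)\le\mu_0(F)$ for every closed $F\subseteq X$ (the two versions being interchanged by complementation). So I would first prove this hub characterization and then obtain parts~(a) and~(b) as comparatively short consequences.

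For the hub: if $\mu_n\to\mu_0$ and $F$ is closed, I approximate $\mathbf 1_F$ from above by the continuous functions $\alpha_k(x)=\max(0,\,1-k\,d(x,F))$, which satisfy $\mathbf 1_F\le\alpha_k$ and decrease pointwise to $\mathbf 1_F$; then $\limsup_n\mu_n(F)\le\limsup_n\int\alpha_k\,d\mu_n=\int\alpha_k\,d\mu_0$ for each $k$, and monotone convergence gives $\int\alpha_k\,d\mu_0\downarrow\mu_0(F)$, proving $\limsup_n\mu_n(F)\le\mu_0(F)$. Conversely, suppose $\liminf_n\mu_n(U)\ge\mu_0(U)$ for all open $U$, and let $\alpha\in C(X,\R)$; since $X$ is compact $\alpha$ is bounded, and after an affine normalization I may assume $0\le\alpha<1$. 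Using the layer-cake identity $\int\alpha\,d\mu=\int_0^1\mu(\{\alpha>u\})\,du$ together with the openness of each $\{\alpha>u\}$, Fatou's lemma on $[0,1]$ gives $\liminf_n\int\alpha\,d\mu_n\ge\int_0^1\liminf_n\mu_n(\{\alpha>u\})\,du\ge\int\alpha\,d\mu_0$; applying this to $1-\alpha$ yields the reverse inequality, hence $\int\alpha\,d\mu_n\to\int\alpha\,d\mu_0$ for all $\alpha$, i.e.\ $\mu_n\to\mu_0$.

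Part~(a) is then quick. For the forward implication, if $A$ is a $\mu_0$-continuity set then $A^\circ\subseteq A\subseteq\overline A$ with $\mu_0(A^\circ)=\mu_0(\overline A)=\mu_0(A)$ because $\Bd(A)=\overline A\setminus A^\circ$ is $\mu_0$-null, so the hub inequalities give
\[
\mu_0(A)=\mu_0(A^\circ)\le\liminf_n\mu_n(A)\le\limsup_n\mu_n(A)\le\mu_0(\overline A)=\mu_0(A).
\]
For the converse, given $\alpha\in C(X,\R)$ observe that the level sets $\{\alpha=u\}$ are pairwise disjoint over $u\in\R$, so $\mu_0(\{\alpha=u\})=0$ for all but countably many~$u$; for such~$u$ the set $\{\alpha>u\}$ is a $\mu_0$-continuity set (its boundary lies in $\{\alpha=u\}$), so the hypothesis gives $\mu_n(\{\alpha>u\})\to\mu_0(\{\alpha>u\})$ for a.e.\ $u$, and the bounded convergence theorem applied to the layer-cake integral (after normalizing $0\le\alpha<1$) yields $\int\alpha\,d\mu_n\to\int\alpha\,d\mu_0$.

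For part~(b), by the hub it suffices to prove $\liminf_n\mu_n(U)\ge\mu_0(U)$ for every open $U$. Write $U=\bigcup_{i\ge 1}A_i$ with each $A_i\in\cA$. Finite unions of members of $\cA$ need not lie in $\cA$, but inclusion--exclusion expresses $\mu\bigl(\bigcup_{i=1}^k A_i\bigr)$ as an alternating sum of the numbers $\mu(A_{i_1}\cap\cdots\cap A_{i_j})$, and each of these intersections \emph{does} lie in $\cA$ since $\cA$ is closed under finite intersections; hence $\mu_n\bigl(\bigcup_{i=1}^k A_i\bigr)\to\mu_0\bigl(\bigcup_{i=1}^k A_i\bigr)$ for every fixed $k$. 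Given $\varepsilon>0$, continuity from below of $\mu_0$ provides $k$ with $\mu_0\bigl(\bigcup_{i=1}^k A_i\bigr)>\mu_0(U)-\varepsilon$, and then $\liminf_n\mu_n(U)\ge\liminf_n\mu_n\bigl(\bigcup_{i=1}^k A_i\bigr)=\mu_0\bigl(\bigcup_{i=1}^k A_i\bigr)>\mu_0(U)-\varepsilon$; letting $\varepsilon\to 0$ completes the argument. The one genuinely non-bookkeeping point---and hence the step I would treat most carefully---is exactly this reduction in~(b): the hypothesis controls only $\cA$, which is not a union-closed family, so the inclusion--exclusion trick together with truncation to a finite subunion is what converts ``$U$ is a countable union of members of $\cA$'' into usable information; everything else is the textbook portmanteau machinery.
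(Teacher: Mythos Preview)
Your proof is correct. Note, however, that the paper does not actually prove this theorem: it is quoted without proof as a standard result from Billingsley's \emph{Convergence of Probability Measures} (pages~16--17), where part~(a) is the Portmanteau Theorem and part~(b) is the subsequent Theorem~2.2. Your argument---routing everything through the open/closed set characterization, then using inclusion--exclusion on finite subunions to handle~(b)---is essentially the proof Billingsley gives, so there is nothing to contrast.
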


A one-parameter family of measures $\mu_t$ is said to be \emph{weakly
continuous} if $t_i\to t$ implies $\mu_{t_i}\to \mu_{t}$. Our starting point in
this paper is a recent result of Alves and Pumari\~no~\cite{AP}, who show that
the family of acims $\mu_t$ for the core tent maps~$f_t$ have densities which
vary continuously in $L^1$, so that, in particular:
\begin{theorem}[Alves \& Pumari\~no] 
    The family $\mu_t$ of acims for the core tent family is weakly continuous.
\end{theorem}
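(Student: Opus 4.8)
The plan is to obtain weak continuity as an immediate corollary of the stronger $L^1$-continuity of densities provided by Alves and Pumari\~no, exploiting the fact that on the compact interval $I=[-1,1]$ the pairing of a continuous observable with an $L^1$ function is controlled by the supremum norm of the observable times the $L^1$ norm of the function. Since, after the affine rescaling of Section~\ref{BBM}, all the maps $f_t$ act on the same interval $I$, their acims $\mu_t$ are all measures on $I$ and their densities are all functions on the same measure space, so this is a clean comparison.

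Concretely, I would write $m$ for Lebesgue measure on $I$ and let $\psi_t := d\mu_t/dm \in L^1(I,m)$ be the density of the (unique) acim of $f_t$; the Alves--Pumari\~no theorem says precisely that $t\mapsto \psi_t$ is continuous from the parameter interval into $L^1(I,m)$. Fix a convergent sequence $t_i\to t$ of parameters and an arbitrary observable $\alpha\in C(I,\R)$. Then
\[
  \left|\int_I \alpha\; d\mu_{t_i} - \int_I \alpha\; d\mu_t\right|
  = \left|\int_I \alpha\,(\psi_{t_i}-\psi_t)\; dm\right|
  \le \|\alpha\|_\infty\,\|\psi_{t_i}-\psi_t\|_{L^1(m)},
\]
and the right-hand side tends to~$0$ by hypothesis. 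As $\alpha$ was arbitrary this is exactly the statement $\mu_{t_i}\to\mu_t$, and as $t_i\to t$ was an arbitrary convergent sequence of parameters, the family $\{\mu_t\}$ is weakly continuous.

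There is no genuine obstacle here: all the analytic substance is contained in the theorem of Alves and Pumari\~no, and the passage from $L^1$ convergence of densities to weak convergence of the associated measures is the one-line duality estimate above. The only point worth flagging is that this implication uses the compactness of $I$, which guarantees $\|\alpha\|_\infty<\infty$ for every $\alpha\in C(I,\R)$; no tightness or uniform-integrability input is required, in contrast to the situation on a non-compact domain.
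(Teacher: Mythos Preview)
Your argument is correct and is exactly the approach the paper takes: the paper does not prove this theorem but simply cites the $L^1$-continuity result of Alves and Pumari\~no and notes (``so that, in particular'') that weak continuity follows. Your one-line duality estimate is precisely the content of that ``in particular''.
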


\section{Weak continuity on the fat inverse limit}
In this section we show (Theorem~\ref{measmain}) that weak continuity of the
family~$\mu_t$ of tent map acims implies weak continuity of the induced
measures $\hmu_t$ on the inverse limits $\hI_t$. Since the spaces $\hI_t$ are
varying, this statement needs to be interpreted in the context of fat maps.

Write $P:=I\times J$. Given $X\subset P$ and $t\in J$, write $X^{(t)} := X \cap
(I\times\{t\})$. For each $t\in J$, let $\sigma_t\colon I\to P$ denote the
embedding $x\mapsto (x,t)$, and let $\tmut = (\sigma_t)_*\mu_t$, so that
$\tmut$ is a measure on~$P$ which is supported on~$P^{(t)}$.

It is routine to prove weak continuity of the family of measures
$\{\tmut\}$ (indeed, if~$X$ is any compact metric space and $\{\mu_t\}_{t\in
J}$ is a weakly continuously varying family of measures on~$X$, then the
corresponding measures $\tmut$ on $X\times J$ also vary weakly continuously).

Now let $F\colon P\to P$ be the fat map defined by $F(x,t) = (f_t(x), t)$, and
set $\hP:=\ilim(P, F)$. As in Section~\ref{BBM}, we may think of $\hP$ as the
topologized disjoint union of the inverse limits $\hI_t$, since each map
$\iota_t\colon \hI_t\to \hP$ defined by $\iota_t(\thr{x_0, x_1,  \dots}) =
\thr{(x_0, t), (x_1, t), \dots}$ is a homeomorphism onto its image, which image
is called the \emph{$t$-slice} of~$\hP$.

Recall that $\hmu_t$ denotes the $\hf_t$-invariant measure on $\hI_t$ induced
by the acim $\mu_t$ for~$f_t$. For each~$t$, we write $\thmut =
(\iota_t)_*(\hmu_t)$, a measure on $\hP$ supported on its $t$-slice. Our aim in
this section is to show that the family of measures $\thmut$ is weakly
continuous. 

\begin{remark} 
\label{rk1}
    Let $\pi_n\colon \hP\to P$ be the projections on~$\hP$, and denote by
    $\pi_{n,t}\colon \hI_t\to I$ the projections on $\hI_t$. Then
    $\sigma_t\circ\pi_{n,t} = \pi_n\circ\iota_t$: that is, $\pi_n$ acts as
    $\pi_{n,t}$ on the $t$-slice of $\hP$. It follows that for each $t\in J$ we
    have $(\pi_n)_*\thmut = (\pi_n\circ\iota_t)_*\hmu_t = (\sigma_t\circ
    \pi_{n,t})_* \hmu_t = (\sigma_t)_*\mu_t = \tmut$ for all~$n$, so that
    $\thmut$ is the measure on $\hP$ induced by the measure $\tmut$ on~$P$.
\end{remark}

To show weak continuity of the family $\thmut$, we will apply
Theorem~\ref{billthm}(b) to the family of subsets of~$\hP$ obtained as finite
intersections of sets of the form $\pi_n^{-1}(R)$ for \emph{tilted
rectangles}~$R$, which we now define.

\begin{definition}
    A \emph{tilted rectangle} in $\R^2$ is obtained by rotating an open (possibly empty) rectangle $(a_1, a_2)\times (b_1, b_2)$ by $\pi/4$. We write $\cR$ for the set of all tilted rectangles contained in~$P$.
\end{definition}

\begin{definition}
    A subset $B$ of~$P$ is called \emph{simple} if it satisfies:
    \begin{enumerate}[(a)]
        \item $B$ is open in~$P$, and
        \item for each $t\in J$, $(\Bd(B))^{(t)}$ is finite.
    \end{enumerate}
\end{definition}
Note that, by~(b), a simple subset of~$P$ is a continuity set for all of the measures $\tmut$.

\begin{lemma} \mbox{}
\label{simple-lem}
    \begin{enumerate}[(a)]
        \item Any finite intersection of simple subsets of~$P$ is simple.
        \item If $R\in\cR$ and $n\in\N$, then $F^{-n}(R)$ is simple.
    \end{enumerate}
\end{lemma}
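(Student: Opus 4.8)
The plan is to prove the two parts separately, with part~(b) being the substantive one.

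\medskip

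\noindent\textbf{Part (a).} This should be essentially immediate. If $B_1,\dots,B_k$ are simple, then $B := \bigcap_{j=1}^k B_j$ is open in~$P$ as a finite intersection of open sets. For the boundary condition, one uses the standard fact that $\Bd(B_1\cap\cdots\cap B_k) \subseteq \bigcup_{j=1}^k \Bd(B_j)$. Intersecting with a slice, $(\Bd(B))^{(t)} \subseteq \bigcup_{j=1}^k (\Bd(B_j))^{(t)}$, which is a finite union of finite sets, hence finite. So $B$ is simple.

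\medskip

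\noindent\textbf{Part (b).} The key point is to understand $F^{-n}(R)$ slice by slice: since $F(x,t) = (f_t(x),t)$, we have $(F^{-n}(R))^{(t)} = f_t^{-n}(R^{(t)}) \times \{t\}$, where $R^{(t)} = \{x\in I : (x,t)\in R\}$. A tilted rectangle~$R$ meets each vertical line $\{t\}\times I$ in an open interval (or the empty set), so $R^{(t)}$ is an open subinterval of~$I$, varying with~$t$; consequently $f_t^{-n}(R^{(t)})$ is a finite union of open intervals. First I would establish openness of $F^{-n}(R)$ in~$P$: this follows because $R$ is open in $\R^2$ (hence in~$P$) and $F$ is continuous, so $F^{-n}(R) = (F^n)^{-1}(R)$ is open. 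For the boundary condition I would argue that $\Bd(F^{-n}(R)) \subseteq F^{-n}(\Bd R) \cup (\text{something controlled})$; more carefully, since $F^{-n}(R)$ is open, its boundary is contained in $F^{-n}(\overline R) \setminus F^{-n}(R)$, and a point $(x,t)$ in this set has $f_t^n(x) \in \Bd(R^{(t)})$, i.e.\ $f_t^n(x)$ is one of the (at most two) endpoints of the interval $R^{(t)}$. Thus $(\Bd(F^{-n}(R)))^{(t)}$ is contained in $f_t^{-n}(\{$two points$\})$, which is finite because $f_t$ is a tent map (each point has at most~$2$ preimages, so at most $2^{n+1}$ total). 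Hence $(\Bd(F^{-n}(R)))^{(t)}$ is finite for every~$t$, and $F^{-n}(R)$ is simple.

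\medskip

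\noindent The main obstacle I anticipate is the boundary estimate in~(b): one must be careful that $\Bd(F^{-n}(R))$ genuinely projects into preimages of $\Bd(R^{(t)})$ rather than into preimages of the full $\Bd(R)$ (which would include the two ``slanted'' edges of the tilted rectangle and could contribute an interval's worth of points in a bad slice). The resolution is that the ``tilted'' edges of~$R$ are transverse to the vertical slices, so intersecting with a fixed slice $\{t\}\times I$ only ever picks up the two endpoints of the interval $R^{(t)}$, never a whole edge; and a point on the boundary of $F^{-n}(R)$, being a limit of points in $F^{-n}(R)$ and of points outside it \emph{within its own slice} (since $F$ preserves slices), must map under $f_t^n$ to the boundary of $R^{(t)}$ inside~$I$. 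One should also dispose of the trivial case where $R^{(t)}$ is empty or where an endpoint of $R^{(t)}$ lies outside the interior of $I$, but these only make the relevant finite set smaller.
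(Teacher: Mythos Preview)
Your proposal is correct and follows essentially the same route as the paper: for~(a) both use $\Bd\!\left(\bigcap_j B_j\right)\subseteq\bigcup_j\Bd(B_j)$, and for~(b) both argue that a point $(x,t)\in\Bd(F^{-n}(R))$ must satisfy $(f_t^n(x),t)\in\Bd R$, so that $(\Bd(F^{-n}(R)))^{(t)}$ lies in the $f_t^n$-preimage of at most two points and hence has at most $2^{n+1}$ elements. One small remark: your claim in the obstacle paragraph that a boundary point of $F^{-n}(R)$ is approximated from inside and outside \emph{within its own slice} is not justified (the boundary is taken in~$P$, and nearby points may lie in other slices), but this is harmless since your earlier containment $\Bd(F^{-n}(R))\subseteq F^{-n}(\overline{R})\setminus F^{-n}(R)=F^{-n}(\Bd R)$ already delivers the conclusion without it.
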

\begin{proof}
    (a) is trivial (using $\Bd(B_1\cap B_2) \subset \Bd(B_1) \cup \Bd(B_2)$).
    For~(b), $F^{-n}(R)$ is clearly open, so we only need show that every
    $(\Bd(F^{-n}(R)))^{(t)}$ is finite. 

    If $(x,t)\in P$, then $(x,t)\not\in F^{-n}(R)$ if and only if $f_t^n(x)$ is
    not in the open interval $R^{(t)}$. If this is the case then the same is
    true for all $(x', t')$ in a neighborhood of~$(x,t)$ --- and hence
    $(x,t)\not\in\Bd(F^{-n}(R))$ --- unless $f_t^n(x)\in\Bd(R^{(t)})$. It
    follows that $\#(\Bd(F^{-n}(R)))^{(t)})\le 2^{n+1}$ for each~$t$.
\end{proof}

\begin{lemma}
\label{Blemma}
    Let $k\ge 1$, $R_1,\dots,R_k\in\cR$, and $n_1,\dots,n_k\in\N$ with $n_k =
    \max(n_1,\dots,n_k)$. Write
    \[
        A = \pi_{n_1}^{-1}(R_1) \cap\dots\cap \pi_{n_k}^{-1}(R_k) \subset \hP.
    \]
    Then $A = \pi_{n_k}^{-1}(B)$ for some simple set~$B$.
\end{lemma}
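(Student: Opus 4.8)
The plan is to collapse all $k$ constraints onto the single top-level projection $\pi_{n_k}$ by transporting the lower-level constraints forward under iterates of the fat map $F$, and then to quote Lemma~\ref{simple-lem}.

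First I would record the compatibility identity on $\hP = \ilim(P,F)$: since every thread $\bx = \thr{x_0,x_1,\dots}$ satisfies $F(x_{m+1}) = x_m$, we have $x_n = F^{\,n_k-n}(x_{n_k})$ whenever $n\le n_k$, i.e.
\[
    \pi_n = F^{\,n_k-n}\circ \pi_{n_k}\qquad (n\le n_k).
\]
Because $n_k=\max(n_1,\dots,n_k)$, this applies with $n=n_i$ for every $i$, giving
\[
    \pi_{n_i}^{-1}(R_i) = \pi_{n_k}^{-1}\!\left(F^{-(n_k-n_i)}(R_i)\right).
\]
Intersecting over $i$ and using that preimages commute with intersections, I would set
\[
    B := \bigcap_{i=1}^k F^{-(n_k-n_i)}(R_i)\subset P,
\]
so that $A=\pi_{n_k}^{-1}(B)$, as required.

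It then remains only to check that $B$ is simple. Each factor $F^{-(n_k-n_i)}(R_i)$ is simple by Lemma~\ref{simple-lem}(b), the exponent $n_k-n_i$ lying in~$\N$ precisely because $n_k$ is the maximum (for $i=k$ the exponent is~$0$ and the factor is $R_k$ itself, which is simple since a tilted rectangle, having no horizontal edges, meets each line $I\times\{t\}$ in boundary in at most two points). A finite intersection of simple sets is simple by Lemma~\ref{simple-lem}(a), so $B$ is simple and the proof is complete.

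There is no real obstacle here; the only point requiring care is the index bookkeeping --- that a constraint at level $n_i$ becomes a preimage under the \emph{forward} iterate $F^{\,n_k-n_i}$ (lower indices being the more-iterated coordinates of a thread), and that the hypothesis $n_k=\max(n_1,\dots,n_k)$ is exactly what keeps all these exponents nonnegative so that Lemma~\ref{simple-lem}(b) is applicable.
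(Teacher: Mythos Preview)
Your argument is correct and is essentially the paper's own proof: both collapse each constraint $\pi_{n_i}^{-1}(R_i)$ to $\pi_{n_k}^{-1}\!\bigl(F^{-(n_k-n_i)}(R_i)\bigr)$ and then invoke Lemma~\ref{simple-lem}. The only cosmetic difference is that the paper packages the same computation as an induction on~$k$, peeling off one factor at a time, whereas you do all~$k$ at once; the resulting set~$B$ is the same in both cases.
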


\begin{proof}
    The proof is by induction on~$k$, with trivial base case $k=1$. For the
    general case, applying the inductive hypothesis to $\pi_{n_2}^{-1}(R_2)
    \cap\dots\cap \pi_{n_k}^{-1}(R_k)$, we can write $A = \pi_{n_1}^{-1}(R_1)
    \cap \pi_{n_k}^{-1}(B')$ for some simple set~$B'$. Observing that if $n\le m$
    and $B_1, B_2\subset P$ we have $\pi_n^{-1}(B_1)\cap
    \pi_m^{-1}(B_2) = \pi_m^{-1}(F^{n-m}(B_1)\cap B_2)$, we can write $A =
    \pi_{n_k}^{-1}(B)$ with $B=F^{n_1-n_k}(R_1) \cap B'$, which establishes
    the result since~$B$ is simple by Lemma~\ref{simple-lem}.
\end{proof}

\begin{theorem}\label{measmain}
    The family of measures $\{\thmut\}$ on $\hP$ is weakly continuous.
\end{theorem}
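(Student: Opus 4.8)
The plan is to apply Theorem~\ref{billthm}(b) to the collection $\cA$ of subsets of $\hP$ consisting of finite intersections of cylinder sets $\pi_n^{-1}(R)$ with $R \in \cR$ a tilted rectangle. First I would check that $\cA$ satisfies the hypotheses of Theorem~\ref{billthm}(b): it is closed under finite intersections by construction, and every open subset of $\hP$ is a countable union of such sets. The latter holds because tilted rectangles form a basis for the topology of $P$ (they are open and every open set in $P$ is a countable union of tilted rectangles — rotating the standard basis of axis-parallel rational rectangles), hence the sets $\pi_n^{-1}(R)$ generate the product topology on $\hP \subset P^\N$, and finite intersections of these form a basis; countable unions then recover every open set. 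The reason for using \emph{tilted} rectangles rather than axis-parallel ones is presumably that their boundaries meet each vertical slice $I \times \{t\}$ in finitely many points, which is exactly what makes the sets in Lemma~\ref{Blemma} have finite slice-boundaries (an axis-parallel rectangle's top and bottom edges would be horizontal and meet infinitely many slices, but the relevant condition is about the \emph{vertical} slices $(\Bd B)^{(t)}$, so actually the tilting ensures $\Bd R$ meets each vertical line in finitely many — two — points; see the proof of Lemma~\ref{simple-lem}(b)).

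Next, given $A \in \cA$, say $A = \pi_{n_1}^{-1}(R_1)\cap\dots\cap\pi_{n_k}^{-1}(R_k)$ with $n_k = \max_i n_i$, Lemma~\ref{Blemma} writes $A = \pi_{n_k}^{-1}(B)$ for a simple set $B \subset P$. I would then compute, using Remark~\ref{rk1}, that $\thmut(A) = \thmut(\pi_{n_k}^{-1}(B)) = ((\pi_{n_k})_*\thmut)(B) = \tmut(B)$. So weak continuity of $\{\thmut\}$ at a parameter $t$ reduces to showing $\tmuti(B) \to \tmut(B)$ whenever $t_i \to t$, for every simple set $B$. Since $B$ is simple, $(\Bd B)^{(t)}$ is finite, so $\Bd B$ has $\tmut$-measure zero (the measure $\tmut$ is supported on the slice $P^{(t)}$ and is a pushed-forward acim, hence atomless, so it gives zero mass to the finite set $(\Bd B)^{(t)} = \Bd(B) \cap P^{(t)}$); thus $B$ is a $\tmut$-continuity set. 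By Theorem~\ref{billthm}(a) applied to the already-established weak continuity of the family $\{\tmut\}$ (stated in the text: "It is routine to prove weak continuity of the family of measures $\{\tmut\}$"), $\tmuti \to \tmut$ weakly, and hence $\tmuti(B) \to \tmut(B)$ since $B$ is a $\tmut$-continuity set.

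Combining these, $\thmuti(A) = \tmuti(B) \to \tmut(B) = \thmut(A)$ for every $A \in \cA$, and Theorem~\ref{billthm}(b) yields $\thmuti \to \thmut$, i.e. the family $\{\thmut\}$ is weakly continuous.

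\textbf{The main obstacle} I anticipate is the bookkeeping in verifying that $\cA$ generates the topology of $\hP$ in the precise sense demanded by Theorem~\ref{billthm}(b) — i.e. that \emph{every} open subset of $\hP$, not merely a basis, is a \emph{countable} union of sets from $\cA$. This requires noting that $\hP$ is second countable (being a subspace of $P^\N$ with $P$ second countable), so a countable subfamily of $\cA$ already forms a basis, and then every open set is a countable union of basic sets. A secondary subtlety is confirming that the atomlessness of $\tmut$ on its slice genuinely gives $\tmut(\Bd B) = 0$: one needs that $\Bd(B)$, computed in $P$, intersected with the support $P^{(t)}$ of $\tmut$, is contained in $(\Bd B)^{(t)}$, which is immediate from the definition $(\Bd B)^{(t)} = \Bd(B) \cap (I \times \{t\})$. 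Everything else is routine.
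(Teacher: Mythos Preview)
Your proposal is correct and follows essentially the same approach as the paper's own proof: define $\cA$ as finite intersections of cylinders over tilted rectangles, use Lemma~\ref{Blemma} to rewrite each $A\in\cA$ as $\pi_n^{-1}(B)$ for a simple~$B$, invoke Remark~\ref{rk1} and the weak continuity of the~$\tmut$ (together with the fact that simple sets are continuity sets) to get $\thmuti(A)\to\thmutst(A)$, and conclude by Theorem~\ref{billthm}(b). Your added remarks on second countability and on why $\tmut(\Bd B)=0$ simply make explicit what the paper leaves as one-line observations.
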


\begin{proof}
    Let $\cA$ be the collection of all sets~$A$ as in the statement of
    Lemma~\ref{Blemma}: that is, all finite intersections of sets of the form
    $\pi_n^{-1}(R)$, where $R\in\cR$.

    If $A\in\cA$ then, by Lemma~\ref{Blemma}, we can write $A = \pi_n^{-1}(B)$
    for some $n\in\N$ and some simple subset~$B$ of~$P$. Then, if $t_i\to t_*$ is any convergent sequence in~$J$, we have
    \[
        \thmuti(A) = \thmuti(\pi_n^{-1}(B)) = \tmuti(B) \to \tmutst (B) = \thmutst(\pi_n^{-1}(B)) = \thmutst(A)
    \]
    by Remark~\ref{rk1} and weak continuity of the $\tmut$, since $B$ is a
    continuity set for $\tmutst$.

    The result follows from Theorem~\ref{billthm}(b), since $\cA$ is closed
    under finite intersections, and any open subset of $\hP$ is a countable
    union of sets of~$\cA$ (the latter is an immediate consequence of the fact
    that any open subset of $\hP$ is a countable union of sets of the form
    $\pi_n^{-1}(U)$, where $U$ is open in~$P$).
\end{proof}

\section{Weak continuity of the measures $\nu_t$ supported on the
attractors}

Recall from Section~\ref{BBM} that we write $\Pi = D\times J$, $H\colon
\Pi\to\Pi$ for the fat version of the near homeomorphisms~$H_t$, and $\hPi :=
\ilim(\Pi, H)$; and that the parameterized version of Brown's theorem yields a
continuous $h\colon\hPi\to D^2$ which restricts on each slice to a
homeomorphism onto $D^2$.

Since $I\subset D$ we have $P\subset\Pi$, and $H|_P = F$. Therefore
$\hP\subset\hPi$, and the restriction $h|_{\hP}$, which we will also
denote~$h$, acts on each slice of $\hP$ as a homeomorphism onto the
attractor~$\Lambda_t$ of the Barge-Martin homeomorphism $\Phi_t$. The measures
$\nu_t$ supported on these attractors, originally defined by $\nu_t =
(h_t)_*\hmu_t$, can therefore equivalently be defined by $\nu_t = h_*\thmut$.
The following, which is the main result of the first part of the paper, is an
immediate consequence.

\begin{theorem}
\label{main}
    The $\Phi_t$-invariant measures $\nu_t$ on $D^2$ supported on the
    attractors $\Lambda_t$ vary weakly continuously.
\end{theorem}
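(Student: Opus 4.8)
The plan is to derive Theorem~\ref{main} as a direct corollary of Theorem~\ref{measmain} together with the fact --- already established in the preliminaries --- that the map $h\colon\hPi\to D^2$ is continuous and restricts on each slice to a homeomorphism onto $D^2$. The key observation is that weak convergence of measures is preserved under pushforward by a \emph{single} continuous map, so the entire difficulty is to exhibit all the measures $\nu_t$ as pushforwards under one fixed continuous map of the measures $\thmut$, whose weak continuity is known.

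First I would recall, as the excerpt does, that since $I\subset D$ we have $P\subset\Pi$ and $H|_P = F$, hence $\hP\subset\hPi$ as a closed subset; write $h$ also for the restriction $h|_{\hP}\colon\hP\to D^2$, which is continuous. On the $t$-slice of $\hP$ this restriction is (after the identifications $\hD_t\subset\hPi$ and $\iota_t\colon\hI_t\hookrightarrow\hP$) exactly the homeomorphism $h_t\colon\hI_t\to\Lambda_t\subset D^2$. Therefore, for each $t\in J$,
\[
    h_*\thmut = h_*(\iota_t)_*\hmu_t = (h\circ\iota_t)_*\hmu_t = (h_t)_*\hmu_t = \nu_t,
\]
so that all the measures $\nu_t$ are simultaneously realized as pushforwards, under the \emph{one} continuous map $h$, of the family $\{\thmut\}$.

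Next I would invoke the elementary fact that a continuous map $h\colon Y\to Z$ between compact metric spaces induces a weakly continuous map on probability measures: if $\lambda_i\to\lambda_0$ weakly on $Y$ then for any $\alpha\in C(Z,\R)$ we have $\alpha\circ h\in C(Y,\R)$, whence $\int_Z\alpha\,d(h_*\lambda_i) = \int_Y(\alpha\circ h)\,d\lambda_i\to\int_Y(\alpha\circ h)\,d\lambda_0 = \int_Z\alpha\,d(h_*\lambda_0)$, i.e. $h_*\lambda_i\to h_*\lambda_0$. Applying this with $Y=\hP$, $Z=D^2$, and $h$ the restriction above: given any convergent sequence $t_i\to t_*$ in $J$, Theorem~\ref{measmain} gives $\thmuti\to\thmutst$, and pushing forward yields $\nu_{t_i} = h_*\thmuti \to h_*\thmutst = \nu_{t_*}$. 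This is precisely weak continuity of the family $\{\nu_t\}$, so Theorem~\ref{main} follows.

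I do not expect any serious obstacle here: all the real work has been done in Theorem~\ref{measmain}, and the remaining content is the transfer lemma for pushforwards plus the bookkeeping identifying $h\circ\iota_t$ with $h_t$. The one point that warrants a sentence of care is that $h$ is \emph{not} injective on all of $\hPi$ (only on each slice), so one must be slightly careful to work with the restriction $h|_{\hP}$ and to note that the slicewise identifications $\iota_t$ are compatible with it; but since $h$ is continuous on the whole of $\hPi$ and hence on the closed subset $\hP$, injectivity plays no role in the pushforward argument, and the proof goes through with the two-line computation above.
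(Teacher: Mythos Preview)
Your proposal is correct and is essentially identical to the paper's proof: the paper records the identity $\nu_t = h_*\thmut$ in the paragraph immediately preceding the theorem, and then proves the theorem by exactly the two-line pushforward computation you describe (testing against $\alpha\circ h$ for $\alpha\in C(D^2,\R)$ and invoking Theorem~\ref{measmain}). Your additional bookkeeping about $h\circ\iota_t = h_t$ and your remark that injectivity of $h$ plays no role are accurate but not needed beyond what the paper already states.
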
 
\begin{proof}
    If $\alpha\in C(D^2, \R)$ then $\alpha\circ h \in C(\hP, \R)$ and so by
    Theorem~\ref{measmain}, for any $t_i\to t_*$ in $J$,
    \[
        \int_{\hP} \alpha\circ h \; d\thmuti \to
        \int_{\hP} \alpha\circ h \; d\thmutst.
    \]
    Since $\nu_t = h_*\thmut$ we therefore have
    $\displaystyle{
        \int_{D^2} \alpha\; d\nu_{t_i} \to
        \int_{D^2} \alpha \; d\nu_{t_*}
	}$
    as required.
\end{proof}

\section{Physical measures}
\subsection{basic definitions}
Let $M$ be a smooth manifold and $m$ a measure on~$M$ given by a volume form.
It is common to call such a measure ``a Lebesgue measure", and we adopt that
convention.
\begin{definition}
    Let $f:M\to M$ be continuous with invariant Borel probability measure
    $\nu$. A point $x$ is \emph{regular} (or \emph{generic}) for $\nu$
    under~$f$ if
    \begin{equation*}
    	\frac{1}{n} \sum_{i=0}^{n-1} \alpha(f^i(x)) \to
    	\int \alpha\; d\nu
    \end{equation*}
    for all $\alpha\in C(M, \R)$. A collection~$B$ of regular points is called
        a \emph{basin} for $\nu$. The measure $\nu$ is called \emph{physical}
        if it has a basin with positive Lebesgue measure, $m(B)>0$.
\end{definition} 

Note that the set of regular points for $\mu$ under~$f$ is completely invariant
(i.e.\ $x$ is regular if and only if $f(x)$ is regular). The acim~$\mu_t$ of a
tent map $f_t\colon I\to I$ is physical, since the pointwise Birkhoff ergodic
theorem guarantees that $\mu_t$-almost every (and hence Lebesgue almost every)
point is regular for $\mu_t$.

\begin{remark}\label{SRB}
    A measure is physical for one Lebesgue measure if and only if it is physical for all Lebesgue measures, justifying the lack of specificity about the background measure. 

    There is a substantial variance in terminology surrounding physical
    measures. Some authors require a basin to have full measure in an open set.
    Some use \emph{SRB measure} as synonymous with physical measure, others
    regard SRB measures as only existing for smooth systems, where they are
    defined by a condition on disintegration along unstable manifolds;
\textit{cf.} page 741 of \cite{Ysurvey}.
\end{remark}

\subsection{Physical measures for inverse limits}

The fundamental relationship between regular points for a base measure and for
the induced measure in the inverse limit is (\cite{KRS}):
\begin{theorem}[Kennedy, Raines and Stockman] 
\label{KRS} 
    Let $X$ be a compact metric space, $f\colon X\to X$ be continuous and
     surjective, and $\nu$ be an $f$-invariant Borel probability measure. Let
     $\hf\colon\hX\to\hX$ be the natural extension of~$f$, and $\hnu$ be the
     induced $\hf$-invariant measure.
    	\begin{enumerate}[(a)]
            \item A point $x\in X$ is regular for $\nu$ if and only if one (and
                hence every) point of $\pi_0^{-1}(x)$ is regular for $\hnu$.

            \item A set $B$ is a basin for the measure $\nu$ if and only if
            $\pi_0^{-1}(B)$ is a basin for $\hnu$.
    	\end{enumerate}
\end{theorem}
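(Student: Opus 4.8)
The plan is to prove part~(a) first, since part~(b) follows from it almost immediately. Fix $x\in X$ and a point $\bx\in\pi_0^{-1}(x)$, so $x_0=x$ and $\hf^k(\bx)$ has zeroth coordinate $f^k(x)$ for every $k\ge 0$. The core observation is that, for any $\beta\in C(\hX,\R)$, the Birkhoff averages $\frac1n\sum_{k=0}^{n-1}\beta(\hf^k(\bx))$ are governed by the action of $\hf$ on the first few coordinates plus a tail that is uniformly small. Concretely, I would first reduce to a dense class of test functions: by Stone--Weierstrass (or directly from the definition of the metric on $\hX$), the functions of the form $\beta = \alpha\circ\pi_m$ with $\alpha\in C(X,\R)$ and $m\in\N$ have dense linear span in $C(\hX,\R)$, so it suffices to check the regularity condition for $\bx$ on such $\beta$. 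For $\beta=\alpha\circ\pi_m$ we have $\pi_m(\hf^k(\bx)) = f^{k-m}(x_m)$ for $k\ge m$ and $\pi_m(\hf^k(\bx))=x_{m-k}$ for $k<m$; hence
\[
    \frac1n\sum_{k=0}^{n-1}\beta(\hf^k(\bx)) = \frac1n\sum_{k=m}^{n-1}\alpha\bigl(f^{k-m}(x_m)\bigr) + O(m/n),
\]
and the first term is, up to the vanishing error, a Birkhoff average of $\alpha$ along the forward orbit of $x_m$ under $f$. Since $f(x_m)=x_{m-1}$, $\ldots$, $f^m(x_m)=x_0=x$, the forward $f$-orbit of $x_m$ eventually coincides with that of $x$, so $x_m$ is regular for $\nu$ if and only if $x$ is; and this then shows that $\frac1n\sum_{k=0}^{n-1}\beta(\hf^k(\bx))\to\int\alpha\,d\nu = \int\beta\,d\hnu$ (the last equality because $(\pi_m)_*\hnu=\nu$). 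This proves that $\bx$ is regular for $\hnu$ iff $x$ is regular for $\nu$, and since the argument did not depend on the choice of $\bx\in\pi_0^{-1}(x)$, ``one'' and ``every'' point of the fiber behave identically.

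For part~(b): a set $B\subseteq X$ is a basin for $\nu$ means by definition that every point of $B$ is regular for $\nu$; likewise $\pi_0^{-1}(B)$ is a basin for $\hnu$ means every point of $\pi_0^{-1}(B)$ is regular for $\hnu$. By part~(a), $x\in B$ is regular for $\nu$ iff every point of $\pi_0^{-1}(x)$ is regular for $\hnu$; ranging over $x\in B$, this says precisely that $B$ is a basin for $\nu$ iff $\pi_0^{-1}(B)$ is a basin for $\hnu$.

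I expect the only real subtlety to be the bookkeeping in the displayed estimate: making sure the ``initial segment'' contribution (the terms $k<m$, where $\pi_m(\hf^k(\bx))$ walks backward through $x_{m-1},\dots,x_0$ and is then undefined/irrelevant) and the off-by-$m$ shift in the remaining sum both contribute only $O(m/n)\to 0$, uniformly for the fixed continuous $\alpha$. This is routine but must be done carefully, since $\hf$ acts on a thread by a shift-with-prepending rather than a plain shift. Everything else — the reduction to cylinder-type test functions, the identity $(\pi_m)_*\hnu=\nu$, and the translation between the statements about regular points and about basins — is formal.
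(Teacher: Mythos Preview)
The paper does not prove this theorem: it is quoted from \cite{KRS}, and the only argument supplied in the paper is the remark immediately following it, which justifies the parenthetical ``(and hence every)'' by noting that any two points $\bx,\bx'\in\pi_0^{-1}(x)$ satisfy $d(\hf^{\,i}(\bx),\hf^{\,i}(\bx'))=d(\bx,\bx')/2^i$, so their Birkhoff averages agree. There is therefore no proof in the paper to compare your plan against.

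Your plan is sound, with one slip worth fixing. For $k\ge m$ the correct identity is $\pi_m(\hf^k(\bx))=f^{k-m}(x_0)=f^{k-m}(x)$, not $f^{k-m}(x_m)$: check the case $k=m$, where $\hf^m(\bx)=\thr{f^m(x_0),\dots,f(x_0),x_0,x_1,\dots}$ has $x_0$ in position~$m$. With the correct formula your detour through ``$x_m$ is regular iff $x$ is'' becomes unnecessary, and the Birkhoff average of $\beta=\alpha\circ\pi_m$ along the $\hf$-orbit of $\bx$ is directly $\tfrac{n-m}{n}$ times the length-$(n-m)$ Birkhoff average of $\alpha$ along the $f$-orbit of $x$, plus $O(m/n)$. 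The density reduction to functions $\alpha\circ\pi_m$ is fine (these are already dense, no linear span needed, since any continuous function on $\hX$ depending only on coordinates $0,\dots,m$ factors through $\pi_m$), as is the identity $(\pi_m)_*\hnu=\nu$. Finally, note that the implication ``$\bx$ regular $\Rightarrow$ $x$ regular'' needs no density argument at all: take $\beta=\alpha\circ\pi_0$, so that $\beta(\hf^k(\bx))=\alpha(f^k(x))$ exactly for every $k\ge 0$.
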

	
\begin{remark}
    If $\bx, \bx'\in \pi_0^{-1}(x)$, then $d(\hf^{\,i}(\bx), \hf^{\,i}(\bx')) =
    d(\bx, \bx')/2^i$. It is therefore immediate that if one point of
    $\pi_0^{-1}(x)$ is regular, then every point of $\pi_0^{-1}(x)$ is regular.
\end{remark}

One would like to be able to show that if $\nu$ is a physical measure for
    $(M,f)$, then $\hnu$ is a physical measure for $(\ilim(M,f), \hf)$. The
    problem, as already mentioned, is that $\ilim(M,f)$ is usually not a
    manifold and so there is no natural way to define a background Lebesgue
    measure. One way forward is via the following reasonable definition
    from~\cite{KRS}:
\begin{definition} 
    Let~$X$ be compact with a Lebesgue measure~$m$, and $f:X\to X$
    be continuous and surjective. An invariant measure~$\hnu$ for the natural
    extension $\hf\colon\hX\to\hX$ is called an \emph{inverse limit physical
    measure} if it has a basin $\hB$ which satisfies $m(\pi_0(\hB))>0$.
\end{definition}
The following is then immediate from Theorem~\ref{KRS}:
\begin{corollary}[Kennedy, Raines and Stockman]
\label{KRS2} 
    Let~$X$ be compact with a Lebesgue measure~$m$, $f\colon X\to X$ be
    continuous and surjective, and $\nu$ be an $f$-invariant Borel probability
    measure. Then $\nu$ is physical if and only if the induced $\hf$-invariant
    measure $\hnu$ is inverse limit physical.
\end{corollary}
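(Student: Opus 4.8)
The plan is to obtain Corollary~\ref{KRS2} directly from Theorem~\ref{KRS}, treating the two implications separately and using only the elementary observation that $\pi_0\colon\hX\to X$ is surjective — which follows from the surjectivity of~$f$, since any $x_0\in X$ can be extended backwards to a thread — so that $\pi_0(\pi_0^{-1}(B)) = B$ for every $B\subseteq X$.

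For the forward implication, I would start from a basin $B$ for $\nu$ with $m(B)>0$. By Theorem~\ref{KRS}(b), $\pi_0^{-1}(B)$ is a basin for $\hnu$, and by surjectivity of $\pi_0$ we have $m(\pi_0(\pi_0^{-1}(B))) = m(B) > 0$. Hence $\hnu$ is inverse limit physical, with $\hB := \pi_0^{-1}(B)$ as witnessing basin. For the reverse implication, I would start from a basin $\hB$ for $\hnu$ with $m(\pi_0(\hB))>0$ and set $B := \pi_0(\hB)$. Given $x\in B$, pick $\bx\in\hB$ with $\pi_0(\bx)=x$; since $\hB$ is a basin, $\bx$ is regular for $\hnu$, so Theorem~\ref{KRS}(a) gives that $x$ is regular for $\nu$. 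Thus $B$ is a basin for $\nu$ and $m(B)>0$, so $\nu$ is physical.

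The only point that calls for a remark is measurability: a basin is just a set of regular points, with no measurability assumed, so in the forward direction nothing needs to be checked for $\pi_0^{-1}(B)$, while in the reverse direction the hypothesis $m(\pi_0(\hB))>0$ already presupposes (via inner or outer measure, as appropriate in the ambient convention) that the set $\pi_0(\hB)$ carries positive measure, and that is exactly the set used as the basin for $\nu$. I therefore do not expect any genuine obstacle: all the analytic content sits in Theorem~\ref{KRS}, and the corollary is a short bookkeeping exercise matching the definition of inverse limit physical measure to parts (a) and (b) of that theorem.
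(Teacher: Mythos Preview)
Your proposal is correct and matches the paper's approach exactly: the paper states only that the corollary ``is then immediate from Theorem~\ref{KRS}'' and gives no further proof, so your argument simply spells out that immediacy. The two implications you give, using parts~(a) and~(b) of Theorem~\ref{KRS} together with surjectivity of~$\pi_0$, are precisely the intended bookkeeping.
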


\subsection{Inverse limit physical measures for tent maps}
Recall from Section~\ref{BBM} that in the Barge-Martin construction we regard
the interval~$I$ as being contained in the disk~$D$, and construct
near-homeomorphisms $H_t\colon D\to D$ which agree with $f_t$ on~$I$. We will
now regard the acim $\mu_t$ for $f_t$ as a measure on~$D$, supported on~$I$,
and maintain its name. In this section we show that it is a physical measure
for~$H_t$ with respect to the $(y,s)$-coordinate measure --- denoted~$m$ and
called ``Lebesgue'' --- on~$D$. It then follows by Corollary~\ref{KRS2} that
$\hmu_t$, regarded as a measure on $\hD_t$ supported on $\hI_t$, is an inverse
limit physical measure for $\hH_t$.

The Barge-Martin construction depends on the choice of unwrapping $\{\barf_t\}$
of the tent family $\{f_t\}$. From now on we will assume that our unwrapping
satisfies the additional condition that $\barf_t$ restricts to the identity on
the annulus $S\times [0,3/4]\subset D$ for all~$t$. (Such an unwrapping could
be constructed from an arbitrary unwrapping $\{\barg_t\}$ by rescaling
$\barg_t$ to be defined on $S\times[7/8,1]$ and interpolating between the
identity and $\barg_t|_{S\times\{0\}}$ in $S\times [3/4, 7/8]$.)

\begin{theorem}
\label{thm:inv-lim-phys}
    For each~$t$, the measure $\hmu_t$ on $\hD_t$ is an inverse limit physical measure for $\hH_t$.
\end{theorem}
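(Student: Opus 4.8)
The plan is to show directly that $\mu_t$ is a physical measure for $H_t\colon D\to D$ with respect to the coordinate measure $m$; by Corollary~\ref{KRS2} this immediately yields that $\hmu_t$ is an inverse limit physical measure for $\hH_t$. Since $\mu_t$ is a physical measure for $f_t\colon I\to I$ (by the Birkhoff ergodic theorem, as noted after the definition of physical measure), Lebesgue-almost every $x\in I$ is regular for $\mu_t$ under $f_t$. The key observation is that for $z$ in the interior of $D$ the forward orbit under $H_t$ eventually lands in $I$ and stays there: recall from Section~\ref{BBM} that there is some $N=N(z)$ with $H_t^n(z)\in I$ for all $n\ge N$, and on $I$ the map $H_t$ acts as $f_t$. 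Consequently, if $z$ is such that its (eventual) image in $I$ is regular for $\mu_t$ under $f_t$, then $z$ is regular for $\mu_t$ under $H_t$: the first $N$ terms of the Birkhoff average make a vanishing contribution as $n\to\infty$, and the remaining terms are exactly a Birkhoff average for $f_t$ along a regular point, shifted by finitely many steps. Using that the set of regular points for $f_t$ is completely invariant, being regular under $f_t$ is insensitive to this finite shift.

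The remaining work is to show that the set $B\subset D$ of points $z$ whose terminal $I$-image is regular for $\mu_t$ has $m(B)>0$, indeed positive measure with $m(\pi_0)$-type control suitable for the inverse limit definition. Here is where the additional assumption on the unwrapping enters: $\barf_t$ restricts to the identity on the annulus $S\times[0,3/4]$, so $H_t=\Upsilon\circ\barf_t$ acts on $S\times[0,3/4]$ simply by $(y,s)\mapsto(y,2s)$ for $s\le 1/2$ and $(y,s)\mapsto(y,1)$ for $s\in[1/2,3/4]$ — that is, it contracts the radial coordinate towards $I$ along the arcs $\eta(y,\cdot)$ without moving the angular coordinate~$y$. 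Thus for $z=(y,s)$ with $s$ small, $H_t^n(z)=(y,1)=G(y)=\cos(y)\in I$ once $2^n s\ge 1/2$. So the terminal $I$-image of $(y,s)$ in this annulus is exactly $\cos(y)$, independent of $s$. Since the set $R\subset I$ of $\mu_t$-regular points has full Lebesgue measure in $I$, and $G(y)=\cos y$ pushes the angular Lebesgue measure on $S$ to a measure on $I$ equivalent to Lebesgue (the density is $1/\sqrt{1-x^2}$ away from the endpoints), the set of $y\in S$ with $\cos(y)\in R$ has full measure in~$S$; hence the set of $(y,s)$ in the annulus $S\times(0,3/4]$ with terminal image in $R$ has full $m$-measure in that annulus, which has positive $m$-measure in $D$. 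This gives $m(B)>0$.

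To get the inverse limit physical conclusion in the precise form required, note that in the inverse limit $\hD_t$ the basin we want is $\hB=\pi_0^{-1}(B)$, and we need $m(\pi_0(\hB))>0$; since $\pi_0(\hB)=B$ (as $H_t$ is surjective on $D$, every point of $D$ is $\pi_0$ of some thread) this is exactly $m(B)>0$, already established. Alternatively, and more cleanly, one invokes Corollary~\ref{KRS2} directly: $\mu_t$ is physical for $(D,H_t)$ because it has the basin $B$ with $m(B)>0$, therefore $\hmu_t$ is inverse limit physical for $\hH_t$.

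The main obstacle is the second paragraph: one must be careful that regularity under $H_t$ genuinely follows from regularity of the terminal $I$-image under $f_t$, and that the set of such points is genuinely measurable with positive measure. The complete invariance of the regular set under $f_t$ handles the finite-shift issue cleanly, and the identity-on-annulus hypothesis on the unwrapping is precisely what makes the terminal-image map ($z\mapsto$ its eventual point in $I$) transparent enough to push the full-measure set $R\subset I$ back to a positive-measure set in $D$ — without some such hypothesis, controlling where interior points of $D$ land in $I$ under the (only near-homeomorphic, possibly complicated) maps $\barf_t$ would be delicate. It is worth remarking that one does not even need the full annulus; any subset of $D$ of positive $m$-measure on which the terminal-image map is, say, Lipschitz or absolutely continuous onto a subset of $I$ of positive Lebesgue measure would suffice, but the explicit annulus makes the argument immediate.
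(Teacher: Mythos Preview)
Your proof is correct and follows essentially the same approach as the paper: both show that $\mu_t$ is physical for $H_t\colon D\to D$ by exhibiting a positive-$m$-measure set of points which $H_t$ sends (after one step) to a regular point $\tau(y)=\cos y\in I$, and then invoke Corollary~\ref{KRS2}; the paper's set is precisely $Z_t=Y_t\times[1/2,3/4]$. One small overstatement worth flagging: your claim that the terminal $I$-image of every $(y,s)\in S\times(0,3/4]$ is $\cos y$ is not quite right, since for $s\in(3/8,1/2)$ the doubling step lands $(y,2s)$ in $S\times(3/4,1)$, where $\barf_t$ need not be the identity --- but, as you yourself remark, the sub-annulus $S\times[1/2,3/4]$ already suffices, and this is exactly what the paper uses.
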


\begin{proof}
    Recall that $\tau\colon S\to I$ is the projection $y\mapsto (y,1)$ from the
    boundary circle of~$D$ onto the interval. For each~$t$, denote by $X_t$ the
    set of regular points for $\mu_t$ under~$f_t$, and let~$Y_t =
    \tau^{-1}(X_t)$. Since $\mu_t$ is ergodic and absolutely continuous with
    respect to Lebesgue, $X_t$ has full Lebesgue measure in~$I$; since $\tau$
    is smooth and at most two to one, $Y_t$ has full angular Lebesgue measure
    $2\pi$ in~$S$.

    Let $Z_t = Y_t\times [1/2, 3/4] \subset D$. If $(y,s)\in Z_t$ then
    $H_t(y,s)= \tau(y)\in X_t$, which is regular for $\mu_t$ under $H_t$  since
    $H_t|_I=f_t$: therefore, by complete invariance of the set of regular
    points, every point of $Z_t$ is regular for $\mu_t$ under $H_t$. Since
    $m(Z_t)>0$, $\mu_t$ is physical for $H_t$, and the result follows from
    Corollary~\ref{KRS2}.
\end{proof}

\section{A background measure for the inverse limit}
\label{sec:background}
The Barge-Martin construction of Section~\ref{BBM} produces a family of
homeomorphisms $\Phi_t\colon D^2\to D^2$ having global attractors $\Lambda_t$,
varying Hausdorff continuously, with $\Phi_t|_{\Lambda_t}$ topologically
conjugate to $\hf_t\colon \hI_t\to\hI_t$. Theorem~\ref{main} states that the
$\Phi_t$-invariant measures $\nu_t$ on~$D^2$ induced from $\hmu_t$ vary weakly
continuously. 

In order to complete the proof of Theorem~\ref{introthm}, we need to show that
the $\nu_t$ are physical measures. While there is no natural connection between
Lebesgue measure on~$D^2$ and any structure on the inverse limits, we will now
use a modification of a construction from~\cite{prime} to obtain a weakly
continuous family of Oxtoby-Ulam measures $\rho_t$ on~$D^2$, with respect to
which the $\nu_t$ are physical.

\subsection{Homeomorphisms onto the complement of the attractors}
Let $A$ be the half-open annulus $A=S\times[0,\infty)$. In this section we will
define a slice-preserving map $\Psi\colon A\times J\to \hPi$, whose image is
the complement of the union of the attractors $\hI_t$, and which is a
homeomorphism onto its image. This homeomorphism will be used to transfer
Lebesgue measure on each slice $A\times\{t\}$ to an Oxtoby-Ulam measure on
$\hD_t$.

We start by defining $\Psi$ on each slice. 
\begin{definition}
    For each~$t\in J$, define $\Psi_t\colon A\to \hD_t\setminus \hI_t$ by
    \[
        \Psi_t(y, s) = 
        \begin{cases}
            \thr{(y, s), (y, s/2), (y, s/4), \dots} & \text{ if }s\in[0,1),\\
            \thr{f_t^{\floor{s}-1}(H_t(y,v)), \dots, f_t(H_t(y,v)), H_t(y, v), (y, v), (y, v/2), \dots} & \text{ otherwise,}
        \end{cases}
    \]
    where, in the second case, $v=(u+1)/2$, with $u=s-\floor{s}$ the
    fractional part of~$s$.
\end{definition}

\begin{lemma}
\label{Psi-bijection}
    Each $\Psi_t$ is a bijection
\end{lemma}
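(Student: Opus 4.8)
The plan is to show that $\Psi_t$ is injective and surjective onto $\hD_t\setminus\hI_t$ by exploiting the explicit piecewise formula. First I would note that the two cases of the definition correspond to a partition of the domain $A = S\times[0,\infty)$ according to $\floor{s}$: the first case is $A_0 := S\times[0,1)$, and for each integer $m\ge 1$ the second case restricted to $S\times[m,m+1)$ produces threads whose first $m$ coordinates live in $D\setminus I$-iterates. The key structural observation is that a thread $\bz = \thr{z_0,z_1,\dots}\in\hD_t$ fails to lie in $\hI_t$ precisely when some (equivalently, the first few) coordinates $z_n$ lie off the interval $I$; and since every point of $\operatorname{int}(D)$ eventually maps into $I$ under $H_t$ (as recalled in Section~\ref{BBM}), for such a thread there is a well-defined largest index $n$ with $z_n\notin I$, together with the ``depth'' of $z_n$ measured by its $s$-coordinate. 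This gives the inverse map.

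For \textbf{surjectivity}, I would take $\bz\in\hD_t\setminus\hI_t$ and analyse the tail. Using the additional assumption on the unwrapping (that $\barf_t$, and hence $H_t$ away from where $\Upsilon$ collapses, is the identity on $S\times[0,3/4]$, and in particular $H_t(y,s)=(y,2s)$ for $s\in[0,1/4]$ on the relevant part), the threads of the first type, $\thr{(y,s),(y,s/2),(y,s/4),\dots}$ with $s\in[0,1)$, are exactly the threads all of whose coordinates lie in the closed annulus region and whose coordinates shrink geometrically toward the boundary circle $S$; these account for all $\bz$ whose $z_0\notin I$ but whose forward-thread never re-enters a ``deeper'' region. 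For a general $\bz\notin\hI_t$, let $m\ge 1$ be minimal such that $z_m$ lies in the region where the first-type formula applies (such $m$ exists because the coordinates must eventually be of that form, as the only threads bounded away from that region are those in $\hI_t$); write $z_m=(y,v)$ with $v\in[1/2,1)$ forced by the structure of $H_t$ on $I$, recover $u=2v-1$ and $s=m+u$, and check that the first-type tail starting at $z_m$ together with the $H_t$-orbit constraint $H_t(z_{k+1})=z_k$ forces $z_0,\dots,z_{m-1}$ to be exactly $f_t^{m-1}(H_t(y,v)),\dots,H_t(y,v)$. This shows $\bz=\Psi_t(y,s)$. For \textbf{injectivity}, I would run this analysis in reverse: from $\Psi_t(y,s)$ one reads off $m=\floor{s}$ as the position of the last non-$I$ coordinate (or $0$ in the first case), reads off $(y,v)$ hence $v$ hence $u$ hence $s$, and reads off $y$ from the angular coordinate, so the map is one-to-one; one also checks the two cases have disjoint images ($s<1$ gives $z_0\notin I$ but $z_0$ in the shrinking-annulus region, $s\ge1$ gives a genuinely different tail structure), and that the image omits $\hI_t$ (every $\Psi_t(y,s)$ has $z_0\notin I$) and lands in $\hD_t$ (the $H_t$-compatibility $H_t(z_{n+1})=z_n$ holds by the second-component-preserving property (c) of the unwrapping and the formula $\Upsilon(y,2s)=(y,s)$-type identities).

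The \textbf{main obstacle} I anticipate is bookkeeping at the seams: verifying carefully that the formula in the second case actually defines a valid thread in $\hD_t$ — i.e. that consecutive coordinates satisfy $H_t(z_{n+1})=z_n$ — at the three transition points, namely between the interval-iterates $f_t^j(H_t(y,v))$, at the junction $H_t(y,v)\mapsto(y,v)$, and where the geometric tail $(y,v),(y,v/2),\dots$ begins. This requires using property~(c) of the unwrapping (that $\barf_t$ preserves the $s$-coordinate on $S\times[0,1/2]$) together with the explicit form of $\Upsilon$, and knowing $v=(u+1)/2\in[1/2,1)$ so that $\Upsilon$ acts as the collapse $(y,s)\mapsto(y,1)$ precisely where needed to land in $I$; the case boundary $s=1$ (where $u=0$, $v=1/2$) also needs a consistency check so the two formulas agree in the limit. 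None of this is deep, but it is where an error would hide, so I would write it out coordinate by coordinate. Once the bijection is established, the remaining claims promised in Section~\ref{sec:background} (continuity, that the image is the complement of $\bigcup_t\hI_t$, homeomorphism property) will follow from compactness and further routine arguments, but those are outside the scope of this lemma.
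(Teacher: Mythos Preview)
Your approach is essentially the paper's: locate the least index $k$ with $z_k\notin I$, use the unique-preimage fact that any $(y,s)\in D\setminus I$ has sole $H_t$-preimage $(y,s/2)$ to determine the tail, and recover $(y,s)$ from $k$ and $z_k=(y,v)$. The paper's argument is simply a terser version of yours, omitting the thread-validity bookkeeping you flag as the main obstacle.

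Two small slips to correct before writing it up. In the injectivity sketch, ``$m=\floor{s}$ as the position of the last non-$I$ coordinate'' should read \emph{first}: every coordinate from index $\floor{s}$ onward lies off $I$, so there is no last one. And the parenthetical ``every $\Psi_t(y,s)$ has $z_0\notin I$'' is false when $s\ge 1$, since there $z_0=f_t^{\floor{s}-1}(H_t(y,v))\in I$; the correct reason the image misses $\hI_t$ --- which you in fact gave earlier --- is that $z_{\floor{s}}=(y,v)\notin I$. Neither slip undermines the plan, but both should be cleaned up.
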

\begin{proof}
    Let $\bz = \thr{z_0, z_1, \ldots}\in \hD_t\setminus\hI_t$, so that each
    $z_n\in D$ and $H_t(z_{n+1})=z_n$ for each~$n$. Note that any point
    $(y,s)\in D\setminus I$ (i.e.\ with $s<1$) has unique $H_t$-preimage $(y,
    s/2)$. Therefore if $z_0=(y,s)\not\in I$, then $\bz=\Psi_t(y, s)$. On the
    other hand, if $z_0\in I$ then, since $\bz\not\in\hI_t$, there is some
    least $k>0$ with $z_k\not\in I$. Writing $z_k=(y, v)$ with $v\in[1/2, 1)$
    we have $\bz=\Psi_t(y, k + 2v - 1)$. Therefore $\Psi_t$ is surjective. 

    Given $\bz=\Psi_t(y,s)$, we can determine the integer part of~$s$ as the
    first $k$ with $z_k\not\in I$; and $y$ and the fractional part of~$s$ from
    $z_k$. Therefore $\Psi_t$ is injective.
\end{proof}

Recall from Section~\ref{BBM} the embeddings $\iota_t\colon\hD_t\to\hPi$
defined by $\bz\mapsto \thr{(z_0, t), (z_1, t),\ldots}$. Write $\hPi_C =
\hPi\setminus\bigsqcup_{t\in J}\iota_t(\hI_t)$, the complement of the union of
the attractors, and define $\Psi\colon A\times J\to \hPi_C$ by $\Psi( (y,s), t)
=
\iota_t(\Psi_t(y,s))$.

\begin{theorem}
\label{thm:Psi-cts}
    $\Psi$ is a slice-preserving homeomorphism. In particular, each $\Psi_t$ is
    a homeomorphism.
\end{theorem}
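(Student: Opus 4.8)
The plan is to prove continuity of $\Psi$, continuity of $\Psi^{-1}$, and slice-preservation separately, with the bulk of the work being the two continuity statements. Slice-preservation is immediate from the definition $\Psi((y,s),t) = \iota_t(\Psi_t(y,s))$, since $\iota_t$ maps into the $t$-slice of $\hPi$. For the continuity statements I would work with the topology on $\hPi$ induced from the product $\Pi^{\N}$: a sequence $\bz^{(j)} \to \bz$ in $\hPi$ iff $z^{(j)}_n \to z_n$ in $\Pi$ for every fixed $n$. So the essential task is to understand, for a convergent sequence $((y_j, s_j), t_j) \to ((y,s),t)$ in $A \times J$, the behaviour of each coordinate of $\Psi((y_j,s_j),t_j)$.

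First I would establish continuity of $\Psi$. The only subtlety is that the formula for $\Psi_t$ is defined by cases according to $\floor{s}$, so one must check continuity across the ``seams'' $s \in \N$. Fix $n$ and consider the $n$-th coordinate $z_n$ of $\Psi((y,s),t)$ as a function of $((y,s),t)$. Away from integer values of $s$ this is visibly continuous (it is a composition of the continuous maps $(y,v) \mapsto H_t(y,v)$, iterates of the fat map $F$, and the halving maps $s \mapsto s/2$, all applied to continuously varying data, and $\floor{s}$ is locally constant). At an integer value $s = k$: approaching from below, $u = s - \floor{s} \to 1^-$ so $v = (u+1)/2 \to 1$, hence $H_t(y,v) \to H_t(y,1) = f_t(\tau^{-1}\text{-value})$... more precisely one uses condition (b) of the unwrapping, $\Upsilon \circ \barf_t|_I = f_t$, together with the fact that $H_t = \Upsilon \circ \barf_t$, to match the two one-sided limits. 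Here the additional normalisation imposed before Theorem~\ref{thm:inv-lim-phys} --- that $\barf_t$ is the identity on $S \times [0,3/4]$ --- and property (c) of the unwrapping guarantee that the point $(y,v)$ with $v \in [1/2, 3/4]$ is a genuine $H_t$-preimage behaving correctly, so the two branch formulas agree in the limit. I would carry this out coordinate by coordinate, noting that for fixed $n$ only finitely many of the branch cases are relevant, so the check is finite.

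Next, continuity of $\Psi^{-1}$, equivalently: if $\Psi((y_j,s_j),t_j) \to \Psi((y,s),t)$ in $\hPi_C$ then $(y_j,s_j) \to (y,s)$ and $t_j \to t$. Convergence $t_j \to t$ follows because $\hPi$ is the disjoint union of the slices and each coordinate $\pi_0$ of the image records $t$ via its $\Pi = D \times J$ second factor. For $(y_j, s_j) \to (y,s)$: by Lemma~\ref{Psi-bijection} the integer part $\floor{s}$ is recovered as the least index $k$ with $z_k \notin I$, and $y$ together with the fractional part of $s$ are recovered continuously from $z_k$ (using that on $D \setminus I$, i.e.\ $s < 1$, the coordinate $(y,v)$ determines $(y, s)$ by $s = k + 2v - 1$, and $v \mapsto (y,v)$ is a homeomorphism onto its image). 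The one genuine issue is \emph{stability of the index} $k$: a priori $z^{(j)}_k$ could approach $I$, making the recovered integer part jump. This is where I expect the main obstacle. One resolves it by noting that the limit thread $\bz \in \hPi_C$ is \emph{not} in $\hI_t$, so $z_k \notin I$ is at positive distance from $I$, forcing $z^{(j)}_k \notin I$ for large $j$ with uniformly bounded-below distance; and for the \emph{earlier} coordinates $z^{(j)}_0, \dots, z^{(j)}_{k-1}$, which lie in $I$ for the limit, one must rule out that they leave $I$ in the sequence --- but if $z^{(j)}_m \notin I$ for some $m < k$, then $z^{(j)}$ would be $\Psi_{t_j}$ of a point with smaller integer part, and taking limits (using the continuity of $\Psi$ already proved, plus compactness to extract a subsequential limit of $(y_j,s_j)$) would force the limit to have integer part $< k$, a contradiction.

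I would therefore structure the proof as: (1) slice-preservation, one line; (2) continuity of $\Psi$, via a coordinatewise finite case-check exploiting the unwrapping properties (b), (c) and the identity normalisation of $\barf_t$ on $S \times [0,3/4]$; (3) bijectivity is already Lemma~\ref{Psi-bijection} combined with the fact that $\Psi$ is surjective onto $\hPi_C$ by construction and injective slicewise with disjoint slice images; (4) continuity of $\Psi^{-1}$, the crux being the index-stability argument above, run by contradiction through a convergent subsequence and an appeal to the continuity from step~(2). The final sentence ``In particular, each $\Psi_t$ is a homeomorphism'' is then immediate by restricting to a slice. I expect step~(4), specifically pinning down that the recovered integer part $\floor{s_j}$ eventually stabilises, to be the only place requiring real care; everything else is routine bookkeeping with the product topology.
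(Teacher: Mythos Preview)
Your approach is essentially correct and would work, but it is a genuinely different route from the paper's. The paper avoids all of the coordinate-by-coordinate seam-checking and the inverse-continuity argument by exploiting a conjugacy: it first observes that $\Psi$ restricted to the base strip $S\times[0,1)\times J$ is manifestly a homeomorphism onto its image (the inverse is just $\pi_0$), and then introduces the homeomorphism $G\colon A\times J\to A\times J$ given by $G((y,s),t) = ((y,2s),t)$ for $s\le 1$ and $((y,s+1),t)$ for $s\ge 1$, checks the intertwining relation $\hH\circ\Psi = \Psi\circ G$, and concludes that $\Psi|_{S\times[0,N+1)\times J} = \hH^N\circ(\Psi|_{S\times[0,1)\times J})\circ G^{-N}$ is a composition of homeomorphisms for every $N$. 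This bootstraps the homeomorphism property from the base strip to all of $A\times J$ in one line, and simultaneously records the dynamically meaningful fact that $\Psi$ conjugates $G$ to $\hH|_{\hPi_C}$. Your direct approach buys nothing extra and costs considerably more bookkeeping.

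One small wrinkle in your step~(4): the ``index stability'' you aim for is not literally true. When $s=k\in\N$, a sequence $s_j\to k^-$ has $\floor{s_j}=k-1$ throughout, so the earlier coordinate $z^{(j)}_{k-1}=(y_j,v_j)$ with $v_j\to 1^-$ genuinely leaves $I$; your claimed contradiction (``the limit would have integer part $<k$'') does not land, since the subsequential limit has $s'=k$ with $\floor{s'}=k$. The fix is simple: do not aim for index stability at all. Your observation that $z_k\notin I$ forces $z^{(j)}_k\notin I$ for large $j$ already gives $\floor{s_j}\le k$, hence $s_j\in[0,k+1)$ is bounded; then compactness of $S\times[0,k+1]\times\overline{J'}$, continuity of $\Psi$ from step~(2), and injectivity from Lemma~\ref{Psi-bijection} yield $(y_j,s_j)\to(y,s)$ directly by the usual ``every subsequence has a further subsequence converging to $(y,s)$'' argument. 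With that adjustment your proof goes through.
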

\begin{proof}
    That $\Psi$ is a slice-preserving bijection is immediate from its
    definition and from Lemma~\ref{Psi-bijection}. The restriction of $\Psi$ to
    $S\times[0,1)\times J$ is given by
    \[
        \Psi( (y, s), t) = \thr{( (y,s), t), ( (y,s/2), t), \dots},
    \]
    which is evidently a homeomorphism onto its image. Let $G\colon A\times
    J\to A\times J$ be the homeomorphism defined by
    \[
        G( (y,s), t) = 
        \begin{cases}
            ( (y, 2s), t) & \text{ if } s\in [0,1],\\
            ( (y, s+1), t) & \text{ if } s\in [1, \infty).
        \end{cases}
    \]  
    It follows immediately from the definitions that $\hH\circ\Psi = \Psi\circ
    G\colon A\times J \to \hPi_C$ (the three cases $s\in[0,1/2)$, $s\in[1/2,
    1)$, and $s\in [1,\infty)$ need to be checked separately). Therefore, for
    each $N\ge 1$, since $G^{-N}$ maps $S\times[0, N+1)\times J$ onto
    $S\times[0, 1)\times J$, we have
    \[
        \Psi|_{S\times[0, N+1)\times J} = \hH^N\circ \Psi|_{S\times[0,1)\times
        J} \circ G^{-N}|_{S\times [0, N+1)\times J},
    \]
    so that $\Psi$ is a homeomorphism on each $S\times [0, N+1)\times J$, and
    hence is a homeomorphism.
\end{proof}

\subsection{The background measure on $\hD_t$}
Let $m$ be the probability measure defined on $A$ by $dm =
\frac{1}{K}\arctan(s)\,dyds$, where $K$ is the normalization constant.
For each~$t\in J$, let $\rho_t'$ be the measure on $\hD_t$ obtained by pushing forward $m$ with $\Psi_t$, and assigning $\rho_t'(\hI_t)=0$.

Recall (see for example~\cite{alpernprasad,fathi}) that a Borel probability
measure on a manifold~$M$ is called \emph{Oxtoby-Ulam (OU)} or \emph{good} if
it is non-atomic, positive on open sets, and assigns zero measure to the
boundary of~$M$ (if it exists).

\begin{theorem}
\label{thm:rho'}
    For each~$t$, $\rho_t'$ is an OU measure on~$\hD_t$ with respect to which
    the set of points which are regular for $\hmu_t$ under~$\hH_t$ has positive
    measure.
\end{theorem}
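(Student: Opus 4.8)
The plan is to verify the three defining properties of an OU measure for $\rho_t'$, and then to exhibit a positive-measure set of regular points. Recall $\rho_t' = (\Psi_t)_* m$ on $\hD_t\setminus\hI_t$, with $\rho_t'(\hI_t)=0$, and that $\Psi_t\colon A\to\hD_t\setminus\hI_t$ is a homeomorphism by Theorem~\ref{thm:Psi-cts}. Since $m$ has a continuous positive density $\tfrac1K\arctan(s)\,dy\,ds$ on $A=S\times[0,\infty)$ (note $\arctan(s)=0$ only on the single slice $s=0$, which has $m$-measure zero), $m$ is non-atomic and positive on every nonempty open subset of~$A$; pushing forward by the homeomorphism $\Psi_t$ preserves both properties, so $\rho_t'$ is non-atomic. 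For positivity on open sets: an arbitrary nonempty open $V\subseteq\hD_t$ either meets $\hD_t\setminus\hI_t$ --- in which case $\Psi_t^{-1}(V\setminus\hI_t)$ is nonempty open in $A$ and hence has positive $m$-measure --- or is contained in $\hI_t$, which is impossible since $\hI_t$ has empty interior in the disk $\hD_t$ (it is a global attractor that is not all of $\hD_t$; alternatively $\hD_t\setminus\hI_t$ is dense, being the image of $\Psi_t$ whose closure is all of $\hD_t$). For the boundary condition, I would identify $\partial\hD_t$ with $\hH_t$-images of the boundary circle; concretely $\Psi_t$ maps $S\times\{0\}$ (the locus $s=0$) to threads lying over the boundary, and this set has $m$-measure zero, while the rest of $\partial\hD_t$ is also hit only by an $m$-null set --- so $\rho_t'(\partial\hD_t)=0$.

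For the regular points, the idea is to reuse the set $Z_t = Y_t\times[1/2,3/4]$ from the proof of Theorem~\ref{thm:inv-lim-phys}, where $Y_t=\tau^{-1}(X_t)$ and $X_t$ is the full-Lebesgue set of $\mu_t$-regular points in $I$. I claim $\Psi_t(Z_t)$ consists of points regular for $\hmu_t$ under $\hH_t$. Indeed, for $(y,s)\in Z_t$ we have $s\in[1/2,3/4]\subset[0,1)$, so $\Psi_t(y,s)=\thr{(y,s),(y,s/2),\dots}$, a thread whose zeroth coordinate is $(y,s)\in D$. Its image under the projection $\pi_0$ is $(y,s)$, and $H_t(y,s)=\tau(y)=(\cos y,1)\in I$ which, being a point of $X_t$, is regular for $\mu_t$ under $H_t=f_t$ on $I$; by complete invariance of the regular set (for $H_t$ on $D$), $(y,s)$ itself is regular for $\mu_t$ under $H_t$. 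By Theorem~\ref{KRS}(a), any point of $\pi_0^{-1}((y,s))$ --- in particular $\Psi_t(y,s)$ itself --- is regular for $\hmu_t$ under $\hH_t$. Thus $\Psi_t(Z_t)$ is a basin for $\hmu_t$, and $\rho_t'(\Psi_t(Z_t)) = m(Z_t) > 0$ since $Z_t$ has positive $dy\,ds$-measure and $\arctan(s)>0$ on $s\in[1/2,3/4]$.

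The main obstacle I anticipate is the boundary condition $\rho_t'(\partial\hD_t)=0$: one must pin down exactly which threads in $\hD_t$ constitute the topological boundary of the disk and check that the $\Psi_t$-preimage of that set is $m$-null. The cleanest route is probably to observe that $\partial\hD_t = \ilim(\partial D, H_t|_{\partial D}) = \ilim(S, \mathrm{id}_S)$ --- since the unwrapping was chosen to be the identity on the annulus $S\times[0,3/4]$, in particular on $S=S\times\{0\}$ --- so $\partial\hD_t \cong S$ sits inside $\hD_t$ as the set of threads $\thr{(y,0),(y,0),\dots}$, which is exactly $\Psi_t(S\times\{0\})$, the image of an $m$-null set. (One should double-check that Brown's theorem / the parameterized version respects boundaries, i.e.\ that $h_t$ carries $\partial\hD_t$ to $\partial D^2$; this is standard but worth a sentence.) Everything else is routine transfer of measure-theoretic properties across the homeomorphism $\Psi_t$.
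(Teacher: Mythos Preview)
Your proposal is correct and follows essentially the same approach as the paper: both transfer the OU properties of $m$ across the homeomorphism $\Psi_t$ (using that $\hI_t$ is closed and nowhere dense to handle positivity on open sets), and both exhibit the positive-measure basin as $\Psi_t(Y_t\times[1/2,3/4]) = \pi_0^{-1}(Z_t)$, invoking Theorem~\ref{KRS} exactly as you do. Your treatment of the boundary condition --- identifying $\partial\hD_t$ with $\Psi_t(S\times\{0\})$, an $m$-null set --- is more explicit than the paper's, which simply asserts that the pushforward is OU on $\hD_t\setminus\hI_t$ and that the zero-extension remains OU, without isolating that step.
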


\begin{proof}
    Because $\Psi_t$ is a homeomorphism, $(\Psi_t)_*(m)$ is an OU measure on
    $\hD_t\setminus \hI_t$. Since $\hI_t$ is closed and nowhere dense in
    $\hD_t$, the extension $\rho'_t$ of this measure is also OU.

    Recall from the proof of Theorem~\ref{thm:inv-lim-phys} that every point of
    $Z_t = Y_t\times[1/2, 3/4]\subset D$ is regular for $\mu_t$ under~$H_t$. It
    follows by Theorem~\ref{KRS} that every point of $R_t':=\pi_0^{-1}(Z_t)$ is
    regular for $\hmu_t$ under $\hH_t$. However, the points of $R_t'$ are
    precisely threads $\thr{(y, s), (y, s/2), \dots}$ with $y\in Y_t$ and
    $s\in[1/2, 3/4]$: that is, $R_t' = \Psi_t(Y_t\times [1/2, 3/4])$. Since
    $Y_t$ has positive one-dimensional Lebesgue measure we have
    $m(Y_t\times[1/2, 3/4])>0$, and the result follows.
\end{proof}

\subsection{The background measures on~$D^2$}
Recall from Section~\ref{BBM} that the parameterized Barge-Martin construction
yields a continuous map \mbox{$h\colon\hPi\to D^2$} which restricts on each
slice to a homeomorphism $h_t\colon \hD_t\to D^2$; that the disk homeomorphisms
$\Phi_t$ are defined by $\Phi_t = h_t\circ\hH_t\circ h_t^{-1}$; and that the
measure $\nu_t$ on $D^2$ is defined by $\nu_t = (h_t)_*\hmu_t$.

Theorem~\ref{main} states that the measures $\nu_t$ vary weakly continuously.
We can now complete the proof of Theorem~\ref{introthm}:

\begin{lemma}
    For each~$t$, the measure~$\nu_t$ is physical with respect to the OU
    measure $\rho_t:=(h_t)_*\rho_t'$. The family of measures $\rho_t$ itself
    varies weakly continuously.
\end{lemma}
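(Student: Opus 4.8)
The plan is to prove the two assertions of the lemma separately, both by transporting facts already established on the fat inverse limit~$\hPi$ through the map~$h$.

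\textbf{Physicality of $\nu_t$.} Fix~$t$. By Theorem~\ref{thm:rho'}, the set $R_t' = \Psi_t(Y_t\times[1/2,3/4])$ of points regular for $\hmu_t$ under~$\hH_t$ has positive $\rho_t'$-measure (indeed $\rho_t'(R_t') = m(Y_t\times[1/2,3/4])>0$). Since $h_t\colon\hD_t\to D^2$ is a homeomorphism conjugating $\hH_t$ to~$\Phi_t$ and pushing $\hmu_t$ to~$\nu_t$, the image $B_t := h_t(R_t')$ is a set of points regular for~$\nu_t$ under~$\Phi_t$: regularity is a purely topological-measure-theoretic condition (convergence of Birkhoff averages of continuous observables), and if $\alpha\in C(D^2,\R)$ then $\alpha\circ h_t\in C(\hD_t,\R)$, so the Birkhoff sums $\frac1n\sum_{i=0}^{n-1}\alpha(\Phi_t^i(h_t(\bz))) = \frac1n\sum_{i=0}^{n-1}(\alpha\circ h_t)(\hH_t^i(\bz))$ converge to $\int\alpha\circ h_t\,d\hmu_t = \int\alpha\,d\nu_t$ for every $\bz\in R_t'$. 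Since $\rho_t = (h_t)_*\rho_t'$ by definition, $\rho_t(B_t) = \rho_t'(R_t')>0$. It is also immediate that $\rho_t$ is OU: $h_t$ is a homeomorphism of disks (carrying boundary to boundary), and the OU property — non-atomic, positive on open sets, zero on the boundary — is preserved by homeomorphisms of manifolds, so Theorem~\ref{thm:rho'} gives the conclusion for~$\rho_t$.

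\textbf{Weak continuity of $\rho_t$.} This runs exactly parallel to the proofs of Theorems~\ref{measmain} and~\ref{main}, but using the ``background'' family in place of~$\thmut$. First I would assemble the slice measures $\rho_t'$ into a single measure-valued family on~$\hPi$ (more precisely, on $\hPi_C$, extended by zero on the attractors), and check that this family varies weakly continuously: since $\Psi\colon A\times J\to\hPi_C$ is a slice-preserving homeomorphism (Theorem~\ref{thm:Psi-cts}) and the source family of measures $m$ on the slices $A\times\{t\}$ is literally constant, the pushed-forward family $(\Psi_t)_*m$ varies weakly continuously on~$\hPi_C$; extending each by zero on the closed nowhere-dense set $\iota_t(\hI_t)$ does not disturb this (the test against a continuous function on~$\hPi$ is unchanged). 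Then, exactly as in the proof of Theorem~\ref{main}, for $\alpha\in C(D^2,\R)$ we have $\alpha\circ h\in C(\hPi,\R)$ (here using that $h$ is defined and continuous on all of~$\hPi$), hence $\int_{D^2}\alpha\,d\rho_{t_i} = \int_{\hPi}(\alpha\circ h)\,d\rho_{t_i}' \to \int_{\hPi}(\alpha\circ h)\,d\rho_{t_*}' = \int_{D^2}\alpha\,d\rho_{t_*}$ whenever $t_i\to t_*$ in~$J$.

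\textbf{Main obstacle.} The physicality half is essentially bookkeeping. The delicate point in the continuity half is the claim that $(\Psi_t)_*m$ varies weakly continuously \emph{as measures on the non-compact space} $\hPi_C$ and that this survives passage to~$\hPi$: one must be slightly careful because $\Psi$ is a homeomorphism onto an open non-compact subset, so a continuous function on~$\hPi$ need not have compact support in~$\hPi_C$. The clean way around this is to observe that we only ever integrate against functions of the form $\alpha\circ h$ with $\alpha\in C(D^2,\R)$, which are bounded and continuous on all of~$\hPi$, and that the mass of $\rho_{t}'$ near the attractor is uniformly controlled — concretely, $\rho_t'$ restricted to the region $\Psi_t(S\times[0,\varepsilon))$ has mass $O(\varepsilon^2)$ uniformly in~$t$, since $dm = \frac1K\arctan(s)\,dy\,ds$ and the $y$-range is the fixed circle. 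This uniform tightness, together with slice-wise weak convergence on each relatively compact piece $\Psi(S\times[0,N]\times J)$ (where Theorem~\ref{thm:Psi-cts} gives a genuine homeomorphism and the source measures are constant), yields the required convergence; I would spell this tightness estimate out as the one non-formal step.
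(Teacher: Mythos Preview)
Your physicality argument is correct and matches the paper's exactly: transport $R_t'$ through the conjugating homeomorphism~$h_t$, and note that the OU property survives a homeomorphism.

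For weak continuity, your approach through $\hPi$ is valid in outline but needlessly indirect, and the ``main obstacle'' you raise is an artefact of that detour. The paper bypasses it entirely: since $\rho_t = (h_t\circ\Psi_t)_*m$ with the \emph{fixed} probability measure~$m$ on~$A$, one has
\[
    \int_{D^2}\alpha\,d\rho_{t_i} \;=\; \int_A \alpha\circ h_{t_i}\circ\Psi_{t_i}\,dm,
\]
and Theorem~\ref{thm:Psi-cts} together with continuity of~$h$ on~$\hPi$ gives $h_{t_i}\circ\Psi_{t_i}\to h_{t_*}\circ\Psi_{t_*}$ pointwise on~$A$. The integrands are uniformly bounded by~$\|\alpha\|_\infty$, so the bounded convergence theorem finishes immediately. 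No tightness estimate is needed, because everything has already been pulled back to a single space with a single measure.

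One genuine slip to flag: your claim that $\Psi_t(S\times[0,\varepsilon))$ is ``near the attractor'' is backwards. For small~$s$ the thread $\Psi_t(y,s)=\thr{(y,s),(y,s/2),\ldots}$ has zeroth coordinate close to the boundary circle~$S$ of~$D$, so this region sits near $\partial\hD_t$, not near~$\hI_t$. Approaching the attractor corresponds to $s\to\infty$ (the integer part $\floor{s}$ counts how many initial coordinates of the thread lie in~$I$). The error is harmless --- tightness at $s=\infty$ is automatic precisely because~$m$ does not depend on~$t$ --- but it does show that the tightness discussion is not actually controlling what you think it is.
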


\begin{proof}
    Recall from the proof of Theorem~\ref{thm:rho'} that $R_t'
    =\pi_0^{-1}(Z_t)$ consists of regular points for $\hmu_t$ under $\hH_t$,
    and is assigned positive measure by the OU measure $\rho'_t$. Since $h_t$
    is a homeomorphism which conjugates $\hH_t$ and $\Phi_t$, it is immediate
    that $R_t := h_t(R'_t)$ consists of regular points for $\nu_t$ under
    $\Phi_t$, and is assigned positive measure by the OU measure $\rho_t$.

    It therefore only remains to show weak continuity of the family
    $\{\rho_t\}$, so let $t_i\to t_*$ in~$J$. By Theorem~\ref{thm:Psi-cts} we
    have that $h_{t_i}\circ\Psi_{t_i}\to h_{t_*}\circ\Psi_{t_*}$ pointwise.

    Let $\alpha\in C(D^2, \R)$. Since $\rho_t$ is supported on
    $h_t\circ\Psi_t(A)$, we have
    \[
        \int_{D^2} \alpha\,d\rho_{t_i} = 
        \int_{h_{t_i}\circ\Psi_{t_i}(A)} \alpha\, d(h_{t_i}\circ \Psi_{t_i})_*
        m = 
        \int_A \alpha\circ h_{t_i}\circ \Psi_{t_i}\,dm
        \to \int_A \alpha\circ h_{t_0}\circ \Psi_{t_0}\, dm
        =\int_{D^2}\alpha\,d\rho_{t_0}
    \]
    as required, by the bounded convergence theorem.
\end{proof}

\subsection{Using the homeomorphic Measures Theorem}
\label{sec:iso-meas}
For a measure to be physical with respect to a background OU measure is not
entirely satisfactory, since positive measure sets for an OU measure may have
zero Lebesgue measure.

The Homeomorphic Measures Theorem due to Oxtoby and Ulam, and also to von
Neumann, states that, for any OU measure $\rho$ on a manifold~$M$, there is a
homeomorphism $g\colon M\to M$ such that $g_*\rho$ is Lebesgue
measure~\cite{OU,alpernprasad}. In particular, if $\lambda$ is a fixed Lebesgue
measure on~$D^2$ we can find disk homeomorphisms $\Theta_t$ with
$(\Theta_t)_*\rho_t= \lambda$. Then the disk homeomorphisms $\Omega_t =
\Theta_t^{-1}\circ \Phi_t\circ\Theta_t$ have ergodic invariant measures $\xi_t
= (\Theta_t)_*\nu_t$, supported on the global attractors $\Theta_t(\Lambda_t)$;
and the measures $\xi_t$ will be physical with respect to the fixed Lebesgue
measure~$\lambda$.

A natural question is whether, since the $\rho_t$ vary weakly continuously, the
homeomorphisms $\Theta_t$ can be chosen to vary continuously with~$t$: without
this, there is no reason to suppose that the $\Omega_t$ will vary continuously,
or the $\xi_t$ weakly continuously. This is an open question posed by
Fathi~\cite{fathi}, who gives some partial answers: there are also relevant
results due to Peck and Prasad (personal communication). However these results
don't apply in the present context, unless one could succeed in imposing very
particular properties on the family of homeomorphisms~$h_t$.

\section{Application to a family of transitive sphere homeomorphisms}

In~\cite{prime} it is shown how a family of transitive sphere
homeomorphisms $\chi_t\colon S^2\to S^2$ can be constructed from the core tent
maps $f_t$ for $t>\sqrt{2}$. These sphere homeomorphisms are factors of the
natural extensions $\hf_t\colon\hI_t\to\hI_t$ by mild semi-conjugacies. The
following result is a combination of Theorems~5.19 and~5.32 of~\cite{prime}.
\begin{theorem}
\label{thm:semi-conj}
    Let $J=(\sqrt{2}, 2]$. There is a continuously varying
    family~$\{\chi_t\}_{t\in J}$ of self-homeomorphisms of~$S^2$, with each
    $\chi_t$ being a factor of $\hf_t\colon\hI_t\to\hI_t$ by a semi-conjugacy
    $g_t\colon \hI_t\to S^2$, all of whose fibers except perhaps one has at
    most~3 points, and whose exceptional fiber carries no topological entropy.

    Each $\chi_t$ is topologically transitive, has dense periodic points, and
    has topological entropy~$\log t$. Moreover, $\eta_t = (g_t)_*(\hmu_t)$ is
    an ergodic invariant OU measure of maximal entropy.
\end{theorem}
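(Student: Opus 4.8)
The statement is, as noted, a combination of \cite[Theorems~5.19 and~5.32]{prime}, so the plan is essentially to quote those results and to supply the short arguments that tie them together. The topological half --- the existence and continuous variation of the family $\{\chi_t\}_{t\in J}$, the semi-conjugacy $g_t\colon\hI_t\to S^2$ with the stated fibre structure, topological transitivity, density of periodic points, and $h_{\mathrm{top}}(\chi_t)=\log t$ --- is precisely \cite[Theorem~5.19]{prime}. For the reader I would still recall \emph{why} each property holds: transitivity and density of periodic points of $\chi_t$ are inherited from $(\hI_t,\hf_t)$, which enjoys them because the core tent map $f_t$ does (both properties passing to natural extensions, and then to continuous factors); and $h_{\mathrm{top}}(\chi_t)=\log t$ follows from $h_{\mathrm{top}}(\hf_t)=h_{\mathrm{top}}(f_t)=\log t$ together with Bowen's inequality for factor maps, using that every fibre of $g_t$ has zero topological entropy --- the finite ones trivially, and the exceptional one by hypothesis.

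For the measure $\eta_t=(g_t)_*\hmu_t$ I would argue directly. Invariance is immediate from $g_t\circ\hf_t=\chi_t\circ g_t$, and ergodicity holds because $\eta_t$ is the image of the ergodic system $(\hI_t,\hf_t,\hmu_t)$ under a measurable factor map. The boundary condition in the definition of an OU measure is vacuous, since $S^2$ has no boundary. The measure $\eta_t$ is positive on open sets because $\hmu_t$ has full support in $\hI_t$ --- the acim $\mu_t$ of the transitive map $f_t$ has full support in $I$ --- and $g_t$ is a continuous surjection, so $\mathrm{supp}(\eta_t)=g_t(\hI_t)=S^2$. It is non-atomic because $\hmu_t$ is (it is the natural extension of an absolutely continuous, hence non-atomic, measure) and $\eta_t(\{p\})=\hmu_t(g_t^{-1}(p))=0$ for every $p$: for a non-exceptional $p$ this is because $g_t^{-1}(p)$ is finite, and for the exceptional point because, if its fibre $E$ had positive $\hmu_t$-measure, then by invariance and ergodicity the maximal $\hf_t$-invariant subset $E_\infty=\bigcap_n\hf_t^{\,n}(E)\subseteq E$ would carry full $\hmu_t$-measure, forcing $\log t=h_{\hmu_t}(\hf_t)\le h_{\mathrm{top}}(\hf_t|_{E_\infty})=0$, which is impossible for $t>\sqrt2$.

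It remains to check that $\eta_t$ has maximal entropy. By the Rokhlin entropy formula, $h_{\mu_t}(f_t)=\int_I\log|f_t'|\,d\mu_t=\log t$, so $\mu_t$, and hence $\hmu_t$ (natural extensions preserve entropy), is a measure of maximal entropy; by the same Bowen-type estimate used above --- all fibres of $g_t$ carrying zero topological entropy --- we get $h_{\eta_t}(\chi_t)=h_{\hmu_t}(\hf_t)=\log t=h_{\mathrm{top}}(\chi_t)$, so $\eta_t$ is a measure of maximal entropy for $\chi_t$. The only genuinely delicate point in all of this is the entropy bookkeeping through the semi-conjugacy, and in particular the control of the exceptional fibre; but that is exactly what is established in \cite[Theorem~5.32]{prime}, so in the present paper it is enough to cite it.
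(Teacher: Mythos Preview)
The paper gives no proof of this theorem: it is stated purely as a quotation of \cite[Theorems~5.19 and~5.32]{prime}, with no argument supplied. Your proposal is therefore already more than the paper does --- it is an expository sketch of \emph{why} the cited results hold, and as such it is correct and coherent. The only places I would tighten it are minor. First, your treatment of the exceptional fibre~$E$ is slightly roundabout: since $\hf_t$ is a homeomorphism, fibre cardinalities are constant along $\chi_t$-orbits, so the unique exceptional point must be fixed and $E$ is already $\hf_t$-invariant; there is no need to pass to a maximal invariant subset $E_\infty$, and ergodicity gives $\hmu_t(E)\in\{0,1\}$ directly. Second, for the entropy equality $h_{\eta_t}(\chi_t)=h_{\hmu_t}(\hf_t)$ you implicitly use both directions --- the trivial $\le$ for factors and the Ledrappier--Walters/Bowen bound for $\ge$ --- and it would be clearer to say so. But none of this is a gap: your sketch is sound, and in the context of the present paper a bare citation, as the authors give, is all that is actually required.
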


The techniques used in this paper can also be applied to show that the measures
$\eta_t$ vary weakly continuously:

\begin{theorem}
    The measures $\eta_t$ on $S^2$ from the statement of
    Theorem~\ref{thm:semi-conj} vary weakly continuously.
\end{theorem}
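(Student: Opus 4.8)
The plan is to mimic exactly the structure used for the disk family: first establish weak continuity of the induced measures $\thmut$ on the fat inverse limit $\hP$ (which is Theorem~\ref{measmain}, already proved), and then push forward by a suitably "continuous" version of the semi-conjugacies $g_t$ to obtain weak continuity of $\eta_t = (g_t)_*\hmu_t$ on $S^2$. The essential point to set up is a \emph{fat} version of the semi-conjugacy: a continuous map $g\colon \hP\to S^2$ which restricts on each $t$-slice to $g_t$, so that $\eta_t$ can equivalently be described as $g_*\thmut$. Granting such a $g$, the argument is then formally identical to the proof of Theorem~\ref{main}: for $\alpha\in C(S^2,\R)$ one has $\alpha\circ g\in C(\hP,\R)$, so Theorem~\ref{measmain} gives $\int_{\hP}\alpha\circ g\,d\thmuti\to\int_{\hP}\alpha\circ g\,d\thmutst$, which unwinds to $\int_{S^2}\alpha\,d\eta_{t_i}\to\int_{S^2}\alpha\,d\eta_{t_*}$.

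The main obstacle is therefore to produce the continuous fat map $g\colon\hP\to S^2$ with $g|_{\hI_t}=g_t$. I would extract this from~\cite{prime}: Theorem~\ref{thm:semi-conj} already asserts that the family $\{\chi_t\}$ varies continuously, and in the construction of~\cite{prime} each $\chi_t$ is obtained by collapsing a continuously-varying family of "decoration" sub-continua of $\hI_t$, so the quotient maps $g_t$ should assemble into a continuous map on the disjoint union $\hP=\bigsqcup_t\hI_t$ (topologized as the fat inverse limit). Concretely, I would either (i) cite the parameterized construction in~\cite{prime} directly to get continuity of $\langle t,\bx\rangle\mapsto g_t(\bx)$, or (ii) if that is not stated in the form needed, argue it by hand: $g_t$ is determined by an equivalence relation on $\hI_t$ whose classes vary Hausdorff-continuously in $t$, and a standard closed-graph/compactness argument then shows the induced map on $\hP$ is continuous. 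Since $\hP$ is compact and $S^2$ Hausdorff, continuity on each slice plus continuity of $t\mapsto(\text{the relation})$ suffices.

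One should also check the minor point that $\eta_t=g_*\thmut$ genuinely holds: this is immediate from $\thmut=(\iota_t)_*\hmu_t$, $g\circ\iota_t=g_t$, and $\eta_t=(g_t)_*\hmu_t$. No continuity-set or Billingsley argument is needed at this stage because all the delicate work was already done in Theorem~\ref{measmain}; here we merely transport a weakly continuous family along a continuous family of maps, exactly as in Theorem~\ref{main}, using bounded convergence implicitly through the uniform continuity of $\alpha$ and pointwise (indeed uniform, by compactness) convergence of $g_{t_i}\circ\iota_{t_i}$ — though in fact the single application of Theorem~\ref{measmain} to the fixed function $\alpha\circ g$ already does everything, so no convergence-of-integrands subtlety arises at all. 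The only real content is the existence of the fat semi-conjugacy $g$, and I expect that to follow from~\cite{prime} with at most a short argument.
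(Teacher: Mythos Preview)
Your proposal is correct and follows essentially the same route as the paper: the paper's sketch proof extracts from~\cite{prime} (specifically the commutative diagram of Figure~15 there) a slice-preserving continuous map whose restriction to the $t$-slice of $\hP$ is $g_t$, so that $\eta_t = g_*\thmut$, and then concludes exactly as in Theorem~\ref{main}. Your option~(i) is precisely what the paper does; option~(ii) is unnecessary since the fat semi-conjugacy is available directly from the construction in~\cite{prime}.
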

\begin{proof}[Sketch proof]
    The details used in this sketch are contained in Section~5.4
    of~\cite{prime}. In that paper, the Barge-Martin construction is carried
    out not in the disk~$D$, but in the sphere~$T$ obtained by collapsing the
    boundary of~$D$, giving rise to Barge-Martin sphere homeomorphisms $\hH_t\colon \hT_t\to\hT_t$ having global attractors $\hI_t$.

    The inverse limit of the fat family of Barge-Martin near homeomorphisms,
     here denoted $\hPi$, is there denoted $\hT_*$; and the commutative diagram
     of Figure~15 of~\cite{prime} includes a slice-preserving map $k =
     K\circ\pi\colon \hT_*\to S^2\times J$, whose restriction $k_t\colon
     \hT_t\to S^2$ to the $t$-slice satisfies $k_t|_{\hI_t} = g_t$, the
     semi-conjugacy of Theorem~\ref{thm:semi-conj}. That is, the
     semi-conjugacies $g_t$ can be gathered into a single continuous map
     $g\colon\hP\to S^2$, with the measures $\eta_t$ given by $\eta_t =
     g_*\thmut$. That these measures vary weakly continuously then follows
     exactly as in the proof of Theorem~\ref{main}.

\end{proof}

\bibliographystyle{amsplain}
\bibliography{weakrefs}

\end{document}